\newtheorem{theorem}{Theorem}[section]
\newtheorem*{theorem*}{Theorem}
\newtheorem{lemma}[theorem]{Lemma}
\newtheorem*{lemma*}{Lemma}
\newtheorem{corollary}[theorem]{Corollary}
\newtheorem*{corollary*}{Corollary}
\newtheorem{proposition}[theorem]{Proposition}
\newtheorem{remark}[theorem]{Remark}
\newtheorem{question}[theorem]{Question}
\newtheorem{definition}[theorem]{Definition}
\newtheorem{example}[theorem]{Example}
\newcommand{\bgl}{\begin{equation}} 
\newcommand{\egl}{\end{equation}}
\newcommand{\bgloz}{\begin{equation*}} 
\newcommand{\egloz}{\end{equation*}}
\newcommand{\bgln}{\begin{eqnarray}} 
\newcommand{\egln}{\end{eqnarray}}
\newcommand{\bglnoz}{\begin{eqnarray*}} 
\newcommand{\eglnoz}{\end{eqnarray*}}
\newcommand{\btheo}{\begin{theorem}}
\newcommand{\etheo}{\end{theorem}}
\newcommand{\btheooz}{\begin{theorem*}}
\newcommand{\etheooz}{\end{theorem*}}
\newcommand{\blemma}{\begin{lemma}}
\newcommand{\elemma}{\end{lemma}}
\newcommand{\blemmaoz}{\begin{lemma*}}
\newcommand{\elemmaoz}{\end{lemma*}}
\newcommand{\bproof}{\begin{proof}}
\newcommand{\eproof}{\end{proof}}
\newcommand{\bbew}{\begin{beweis}}
\newcommand{\ebew}{\end{beweis}}
\newcommand{\bremark}{\begin{remark}\em}
\newcommand{\eremark}{\end{remark}}
\newcommand{\bquestion}{\begin{question}\em}
\newcommand{\equestion}{\end{question}}
\newcommand{\bdefin}{\begin{definition}}
\newcommand{\edefin}{\end{definition}}
\newcommand{\bprop}{\begin{proposition}}
\newcommand{\eprop}{\end{proposition}}
\newcommand{\bcor}{\begin{corollary}}
\newcommand{\ecor}{\end{corollary}}
\newcommand{\bcoroz}{\begin{corollary*}}
\newcommand{\ecoroz}{\end{corollary*}}
\newcommand{\bfa}{\begin{cases}} 
\newcommand{\efa}{\end{cases}}
\newcommand{\bexample}{\begin{example}\em}
\newcommand{\eexample}{\end{example}}
\newcommand{\cF}{\mathcal F}
\newcommand{\cG}{\mathcal G}
\def\Cz{\mathbb{C}}
\def\Nz{\mathbb{N}}
\def\Rz{\mathbb{R}}
\def\1z{\mathbb{1}}
\newcommand{\bk}{{\bf k}}
\newcommand{\bfs}{{\bf s}}
\newcommand{\bft}{{\bf t}}
\newcommand{\lori}{\longrightarrow}
\newcommand{\ma}{\mapsto} 
\newcommand{\Rarr}{\Rightarrow} 
\newcommand{\ve}{\varepsilon}
\def\SEMI{\mbox{$\times\kern-2pt\vrule height5pt width.6pt \kern3pt $}}
\newcommand{\abs}[1]{\left|#1\right|} 
\newcommand{\norm}[1]{\left\|#1\right\|} 
\newcommand{\defeq}{\mathrel{:=}} 
\newcommand{\dop}{\text{: }} 
\newcommand{\supp}{{\rm supp}\,}
\newcommand{\suppc}{{\rm supp}}
\newcommand{\cGu}{\cG^{(0)}}
\newcommand{\lge}{\left\{} 
\newcommand{\rge}{\right\}} 
\newcommand{\lru}{\left(} 
\newcommand{\rru}{\right)} 
\newcommand{\lsp}{\left\langle} 
\newcommand{\rsp}{\right\rangle} 
\newcommand{\rukl}[1]{\lru #1 \rru} 
\newcommand{\gekl}[1]{\lge #1 \rge} 
\newcommand{\spkl}[1]{\lsp #1 \rsp} 
\newcommand{\vp}{\varphi}
\newcommand{\menge}[2]{\gekl{ #1 \dop #2 }} 
\begin{document}

\title[]{\footnotesize  Amenability, Reiter's condition and Liouville property}
\author{Cho-Ho Chu and Xin Li}

\date{}
\keywords{}

\subjclass{}
\begin{abstract}
We show that the Liouville property and Reiter's condition are equivalent for semigroupoids. This result applies to
semigroups as well as semigroup actions. In the special case of  measured groupoids and locally compact groupoids, our result
 proves
Kaimanovich's conjecture of the equivalence of amenability and the Liouville property.
\end{abstract}

\address{School of Mathematical Sciences,
  Queen Mary, University of London,
 London E1 4NS, UK}

 \email{ c.chu@qmul.ac.uk, xin.li@qmul.ac.uk}

\subjclass[2010]{Primary 20L05, 43A05; Secondary 20M30, 22A22, 45E10}

\maketitle



\section{Introduction}

Since the seminal work of von Neumann \cite{vn}, amenable groups and semigroups have had a profound impact on many areas of mathematics. Amenability of locally compact groups has been shown to be equivalent to many fundamental properties
in harmonic analysis including the Liouville property which is one of the subjects of the present paper.
In operator algebras, amenability plays a pivotal  role in  their classification (cf.\cite{{Con,ell}}) as well as in the recent progress on the Novikov conjecture (see \cite{HK,Oz}). Indeed, amenable groupoids satisfy the Baum-Connes conjecture \cite{tu}. Amenability also plays a significant role in the recent development of semigroup C*-algebras relating to some aspects of number theory \cite{CDL,Li,Li2}.

A locally compact group $G$ is amenable if there is a left invariant mean on $L^\infty(G)$. A topological semigroup $G$ is usually called {\it amenable} if there is a left invariant mean on the algebra $LUC(G)$ of bounded left uniformly continuous functions on $G$. These two definitions of amenability are equivalent for locally compact groups. The notion of amenability has been extended to group actions by Zimmer \cite{{Z77,Z78}}. For the more
general case of groupoids which, among other things, unify both concepts of groups and group actions, it was introduced by Renault \cite{{R,AR}}. Amenable groupoids were defined in terms of Reiter's condition, which stipulates the existence of nets of approximately invariant probability measures and was first formulated by Day \cite{day}
 for discrete semigroups. For locally compact groups, Reiter's condition is equivalent to amenability and therefore the definition of an amenable groupoid is a natural extension of the group case. However, for topological semigroups, the question of whether Reiter's condition follows from amenability as defined previously appear to be open \cite[p.321]{Lau}.

The equivalence of amenability and the Liouville property for $\sigma$-compact locally compact groups was first conjectured by Furstenberg \cite{F} and proved by Rosenblatt \cite{r}, Kaimanovich and Vershik \cite{KV}.
More recently, Kaimanovich introduced the Liouville property for groupoids in \cite{K2} and conjectured its equivalence to amenability, having proved that the former implies the latter. For semigroups, the Liouville property for abelian semigroups has been  studied in \cite{ds,lz,RS}, but its connection to amenability has not been the subject of investigation before.

Our main objective in this paper is  to clarify  the relationships of amenability, Reiter's condition and the Liouville property in the setting of semigroupoids, which subsumes and provides a unified treatment to the important cases of groupoids, semigroups and transformation semigroups.
We introduce and study the Liouville property  and Reiter's condition for semigroupoids. We  prove that a semigroupoid possesses the Liouville property if and only if it satisfies Reiter's condition (Theorems~\ref{L-->R_m}, \ref{L-->R_t}, \ref{R-->L_m}, \ref{R-->L_t}). An immediate consequence is the equivalence of the Liouville property and Reiter's condition for semigroup actions
(Theorem~\ref{87}) as well as the equivalence of amenability and the Liouville property for discrete semigroups (Theorem~\ref{610}) and also, for both measured groupoids and topological groupoids (Theorems~\ref{mgpd_LR}, \ref{tgpd_LR}), the latter proves a conjecture of Kaimanovich in \cite{K2}. We thank Vadim Kaimanovich for informing us, after we have written this paper, that his conjecture for {\it measured groupoids} has also been proved by Theo B\"uhler and himself in an unpublished note. Our result includes the case of topological groupoids, which requires some refinements of Reiter's condition.

A Riemannian manifold is said to have the Liouville property if it does not admit non-constant bounded harmonic functions. Examples include complete manifolds with non-negative Ricci curvature, by a well-known result of Yau \cite{yau}.
To introduce the Liouville property for groups, we begin with  a connected Lie group $G$. The Laplace operator of $G$ generates a one-parameter convolution semigroup $(\pi_t)_{t>0}$ of probability measures on $G$ such that a function $f\in L^\infty(G)$ is $C^2$ and harmonic if and only if it satisfies the convolution equations $f=f * \pi_t$ for all $t>0$ (cf. \cite{hunt} and \cite[Proposition~V.6]{a}). This is equivalent to $f= f*\pi_t$ for {\it some} $t>0$ if $G$ is of type T \cite[p.136]{a} (e.g. $G$ is semisimple with finite centre \cite[Th\'eor\`eme II.1]{a}).
  More generally, given a probability measure $\pi$ on a locally compact group $G$, a Borel function $f : G \rightarrow \mathbb{C}$ is called {\it $\pi$-harmonic} if it satisfies the convolution equation $f = f*\pi$.
The latter condition is an analogue of the mean value property which characterises harmonic functions on manifolds.
  We say that $G$ has the {\it Liouville property} if there is an absolutely continuous probability measure $\pi$ on $G$ such that all bounded $\pi$-harmonic functions are constant.
 In \S\,\ref{sgpd}, we extend the definition of Liouville property to semigroupoids, which are algebraic structures that resemble a semigroup, except that multiplication is not globally defined.

We begin our discussion of semigroupoids in the next section, followed by an introduction to  Reiter's condition and the Liouville property, in both the measurable and topological contexts.
To pave our way, we prove some basic results  in \S~\ref{sgpd1} concerning convolution of systems of measures
on semigroupoids.
 In \S\,\ref{LR_sgpd}, we show that the Liouville property implies Reiter's condition for semigroupoids, both in the measurable and topological setting. This extends the result in \cite{K2} for groupoids equipped with a Haar system.
The converse of the previous result for semigroupoids is proved in \S\,\ref{RL_sgpd}. In both results, we only make the weaker assumption of a {\it quasi-Haar system} for  semigroupoids so that wider applicability can be achieved. For example, such semigroupoids include discrete semigroups, which need not admit a Haar system.
The following section (\S~\ref{special}) is devoted to the special cases of
groupoids, transformation semigroups and  semigroups.
Our results on semigroupoids apply directly to semigroups equipped with a {\it quasi-invariant measure} (Theorem~\ref{special_thm_meas-sgp}).
For semigroups without such a measure and not necessarily locally compact,
we discuss briefly  the case of metrizable semigroups, for which one can show that Reiter's condition implies the Liouville property, which in turn implies amenability. The proof of the last implication is different from that for semigroupoids. However, a more thorough treatment will be given in another work \cite{CLi}. We conclude the paper with some examples of semigroups with the Liouville property in \S\,\ref{nilp}.

\section{Semigroupoids}
\label{sgpd}

 We initiate the discussion of semigroupoids with semigroups. By a {\it topological semigroup}, we mean a semigroup $S$ endowed with a topology such that the multiplication on $S$ is jointly continuous. If moreover, the topology is metrizable, we
call $S$ a {\it metric semigroup}.

To discuss semigroupoids in measurable and topological settings, we need to fix some notations for measures and functions on a topological space $X$. A positive finite Borel measure $\mu$ on $X$ is called {\it tight} if for each $\varepsilon > 0$, there is a compact set $K \subseteq X$ such that $\mu  (X \setminus K)<\varepsilon$. Every positive finite Borel measure on a {\it complete separable} metric space is tight.

Let $M(X)$ be the space of complex-valued regular Borel measures on $X$. For each nonzero $\mu \in M(X)$, the norm of $\mu $ is defined by $\Vert \mu \Vert = \vert \mu  \vert  (X)$ where $|\mu|$ is the total variation of $\mu$. The {\it support} of $|\mu|$ is defined by
\[{\rm supp}\, |\mu| = \bigcap\{F\subseteq X: F ~~{\rm is~~ closed}, ~|\mu|(F) = |\mu|(X)\}\]
and the {\it support} of $\mu$, denoted by supp\,$\mu$, is defined to be that of $|\mu|$.   A measure $\mu  \in M(X)$ is called {\it tight} if $\vert  \mu   \vert  $ is tight. Every Borel measure with compact support is a tight measure. We denote by $M_t(X)$ the subspace of $M(X)$ consisting of tight measures on $X$. Let $C_b(X)$ be the $C^*$-algebra of bounded complex continuous functions on $X$. For a metric space $X$, the support {supp}\,$\mu$ of each tight measure $\mu$ is nonempty and separable. Moreover, for every Borel set $B \subseteq X$ and $\varepsilon > 0,$ there is a compact set $K \subseteq B$ satisfying $|\mu|  (B \setminus K) < \varepsilon$.

 We recall that
a topological space $Y$ is a {\it Polish space} if it is homeomorphic to a complete separable metric space. A subset
of $Y$ is called {\it analytic} if it is of form $f(Z)$ for some continuous function $f$ from a Polish space $Z$ to $Y$. Every Borel set in a Polish space is analytic.

In the sequel, by  an {\it analytic} Borel space, we mean a measurable space $( \mathcal{G}, \mathcal{B})$ which is isomorphic
to an analytic set in a Polish space with the relative Borel structure. The sets in the $\sigma$-algebra $\mathcal{B}$
are called the {\it Borel sets} in $\mathcal{G}$. The $\sigma$-algebra $\mathcal{B}$ is often not written
explicitly for an analytic Borel space $\mathcal{G}$. We refer to \cite{av} for the basic properties of analytic Borel spaces.

\begin{remark}\label{sep}\rm
Since an uncountable Polish space with its Borel structure is isomorphic to the unit interval $[0,1]$
with the usual Borel structure \cite[p.\,451]{ku}, we see that the Banach space $L^1(Y, \mu)$ of $\mu$-integrable functions on $Y$ is separable for any Borel measure $\mu$ on a Polish space $Y$. The same is true for $L^1(\cG, \nu)$, where $\nu$ is a Borel measure on an analytic Borel space $\cG$.
\end{remark}

 Let $S$ be a topological semigroup. A function $f \in C_b(S)$ is called {\it left uniformly continuous} if the mapping $a\in S \mapsto \delta_a * f \in C_b(S)$ is continuous, where $\delta_a *f$ is the left translate of $f$ by $a$, which is
 the convolution of the point mass $\delta_a$ and $f$ defined below. The space of bounded left uniformly continuous functions on $S$ will be denoted by $LUC(S)$ which forms a sub-C*-algebra of $C_b(S)$. For a discrete semigroup $S$, we have $LUC(S) = \ell^\infty(S)$.

Given $\pi, \sigma \in M(S)$, we define their convolution $\pi * \sigma$ to be the image of product measure $\pi \times \sigma$
under the map $(x,y) \in S\times S \mapsto xy\in S$, that is,
$$(\pi * \sigma) (E) = \int_{S\times S} \chi_E (xy)d(\pi \times \sigma)(x,y)$$ for each Borel set $E \subseteq S$. For $f\in C_b(S)$, we have
$$\int_S f d(\pi * \sigma) = \int_S \int_S f(xy) d\pi(x) d\sigma(y).$$
In the sequel, we denote by $\pi^n$ the $n$-fold convolution $\overbrace{\pi * \cdots * \pi}^{\rm n-times}$
of $\pi$.

Given $\pi \in M(S)$ and a Borel function $f: S\rightarrow \mathbb{C}$ , we define the {\it semigroup convolutions} $f* \pi$ and $\pi * f$ by $$(f * \pi) (x) = \int_S f(xy) d\pi(y) \quad {\rm and} \quad (\pi * f) (x) = \int_S f(yx)d\pi(y)$$
if the integrals exist.
 For a point mass $\delta_a$ at $a\in S$,  we have $(\delta_a * f) (x) = f(ax)$ and $(f * \delta_a) (x)= f(xa)$ as well as
$$\int_S (\delta_a * f) d \pi = \int_S f d(\delta_a * \pi).$$

\begin{definition}\label{ph}\rm Let $S$ be a topological semigroup and let $\pi \in M(S)$ be a probability measure.  A Borel function $f : S \rightarrow \mathbb{C}$ is called {\it $\pi$-harmonic} if $f*\pi =f$.
\end{definition}


\begin{definition}\rm
A topological semigroup $S$ is called
({\it left}) {\it amenable} if there is a left invariant mean on $LUC(S)$, that is, there exists a norm-one positive linear functional $\vp : LUC(S) \rightarrow \mathbb{C}$ on the C*-algebra $LUC(S)$ satisfying $\vp (\delta_a *f) = \vp (f)$ for all $a \in S$.
\end{definition}
For a locally compact group $S$, this definition of amenability agrees with the usual one \cite[p.67]{nam}.
We now turn to semigroupoids, which
generalise semigroups  and groupoids at the same time, in a very natural way.
A {\it semigroupoid} is a small category. It consists of a set $\cGu$ of objects (called units), a set $\cG$ of morphisms, the surjective {\it source} and {\it target} maps $\bfs, \, \bft : \: \cG \to \cGu$, and a composition map $ (\zeta, \eta)\in \cG^{(2)} \ma \zeta \eta \in \cG \,   $ on $\cG^{(2)} \defeq \menge{(\zeta, \eta) \in \cG^2}{\bfs(\zeta) = \bft(\eta)}$. In the special case where there is only one unit, the concept of semigroupoids reduces to semigroups with identity, which are sometimes called \textit{monoids}. As usual, we identify the units with the corresponding identity morphisms in $\cG$ (which exist by definition). In this way, we may consider $\cGu$ as a subset of $\cG$ and by a slight abuse of language, we call $\cG$ a {\it semigroupoid}.

For each $x \in \cGu$, we write $\cG^x \defeq \bft^{-1}(x) = \menge{\gamma \in \cG}{\bft(\gamma) = x}$. One may view the target map $\bft : \: \cG = \bigcup_{x\in \cGu} \cG^x\to \cGu$ like  a {\it `bundle projection'}. Actually, we will see that the Liouville property for $\cG$ is defined on the {\it `fibres'} $\cG^x$.

A semigroupoid $\cG$  is called {\it Borel} if $\cG$ is an analytic Borel space, $\cGu$ is an analytic Borel space (as a subspace of $\cG$) such that $\bfs$, $\bft$ and the composition map are Borel. In this case, the sets $\cG^{x}$ and the maps $\zeta : \: \eta \in \cG^{\bfs(\zeta)} \mapsto \zeta\eta \in \cG^{\bft(\zeta)}$ are Borel. We call $\cG$ a {\it topological semigroupoid} if it is endowed with a topology compatible with the semigroupoid structure, that is, the composition map, $\bfs$ and $\bft$ are continuous. We denote by $C_c(\cG)$ the space of complex continuous
functions on $\cG$ with compact support.

By a {\it locally compact semigroupoid}, we mean a topological semigroupoid which is locally compact Hausdorff and second countable. This implies that all our locally compact semigroupoids are $\sigma$-compact.

\bexample
Groupoids, which are by definition small categories in which every  morphism is invertible, form a special class of semigroupoids. We refer  to \cite{AR,R} for more information about groupoids.

Another class of examples is given by semigroups, or more generally, semigroup actions. Let $S$ be a semigroup with identity $e$ acting on a set $X$ from the right. We denote the action by $(x,s) \in X \times S \ma x.s \in X$, where $x.e =x$. The semigroupoid  attached to the transformation semigroup $X \curvearrowleft S$ is given by $\mathcal{G} := X \rtimes S \defeq X \times S$ (as a set), with the source and target maps $\bfs: \: (x,s) \in X \rtimes S \ma x.s \in X $, $\bft: \: (x,s) \in X \rtimes S \ma x\in X $, and the composition $(x,s)(x.s,t) = (x,st)$. It is clear that the set $\cG^{(0)} = \menge{(x,e)}{x \in X}$ of units of $X \rtimes S$ can be canonically identified with $X$. By taking $X = \gekl{\rm pt}$, one can also view the semigroup $S$ itself as the semigroupoid $ \gekl{\rm pt} \rtimes S$.

If $X$ and $S$ are  analytic Borel spaces, or topological spaces, such that $\bfs$, $\bft$ and the composition map are Borel or continuous
respectively, then $X \rtimes S$ becomes a Borel or topological semigroupoid. In particular, this is the case if $S$ is a countable discrete semigroup acting on a Borel space or  a topological space by Borel or continuous maps respectively.

Groupoids and transformation semigroups are the two motivating examples for us. We will frequently come back to them.
\eexample

We will need to consider systems of measures on the fibres $\cG^x$ of a semigroupoid $\cG$. For groupoids, these are called {\it kernels} by Connes in \cite[p.11]{Con1}.

\begin{definition}
\rm
Let $\cG$ be a Borel  semigroupoid. A {\it Borel system of measures} on $\cG$ is a family $\lambda = (\lambda^x)_{x \in \cGu}$ of $\sigma$-finite positive Borel measures $\lambda^x$ on $\cG^x$ such that for every non-negative Borel
function $f$ on $\cG$,
the map $ x \in \cGu \ma \spkl{\lambda^y,\,f}\defeq \int_{\cG} f d \lambda^x  \in [0, \infty]$ is Borel measurable, where
 we extend $\lambda^x$ naturally to a measure on $\cG$ such that $\lambda^x(\cG \setminus \cG^x) = 0$, and in particular,
 the map $x \in \cGu \ma \lambda^x(E)$ is Borel for each Borel set $E \subset \cG$.
\end{definition}

\begin{definition} \rm
Let $\cG$ be a topological semigroupoid. A {\it continuous system of measures} on $\cG$ is a family $\lambda = (\lambda^x)_{x \in \cGu}$ of (nonzero) $\sigma$-finite positive Radon measures $\lambda^x$ on $\cG^x$ such that for every $f \in C_c(\cG)$, the map $ x \in\cGu \ma \spkl{\lambda^x,f}\in \Cz$ is continuous.
\end{definition}

Given a Borel semigroupoid $\cG$ with $x \in \cGu$ and $\gamma \in \cG$ satisfying $\bfs(\gamma) = x$, and a Borel measure $\lambda^x$ on $\cG^x$, we write $\gamma \lambda^x$ for the pushforward of $\lambda^x$ under
the Borel map $\gamma: \eta \in \cG^x \ma \gamma \eta\in \cG^{\bft(\gamma)} $, which is the Borel measure
on $\cG^{\bft(\gamma)}$ induced by the map $\gamma$. By definition, we have
$$\spkl{\gamma \lambda^x,f} = \int_{\cG^x} f(\gamma \eta) d \lambda^x(\eta) $$
for each non-negative Borel function $f$ on $\cG^{\bft(\gamma)}$.

For a locally compact semigroupoid $\cG$, the pushforward $\gamma\lambda^x$ is defined analogously.
As usual, we write $\mu \prec \nu$ to mean that a measure $\mu$ is absolutely continuous with respect to $\nu$.

\begin{definition}
\label{lacm}
\rm
Let $\cG$ be a Borel (resp. topological) semigroupoid.
A Borel (resp. continuous) system of measures $\lambda = (\lambda^x)_{x \in \cGu}$ on $\cG$ is called a {\it (left) Haar system} if for all $\gamma \in \cG$, we have $\gamma \lambda^{\bfs(\gamma)} = \lambda^{\bft(\gamma)}$ and the map $(\gamma,\eta)\in \cG^2 \ma \rukl{d (\gamma \lambda^{\bfs(\gamma)}) / d \lambda^{\bft(\gamma)}} (\eta)\in [0, \infty]  $ is Borel,
where $d (\gamma \lambda^{\bfs(\gamma)}) / d \lambda^{\bft(\gamma)}$ denotes the Radon-Nikodym derivative. In the preceding
definition, the system $\lambda = (\lambda^x)_{x \in \cGu}$ will be called a {\it (left) quasi-Haar system} if the condition
$\gamma \lambda^{\bfs(\gamma)} = \lambda^{\bft(\gamma)}$ is replaced by $\gamma \lambda^{\bfs(\gamma)} \prec \lambda^{\bft(\gamma)}$.
\end{definition}

\begin{definition}
\label{proper}
\rm
A Borel system of measures $\lambda = (\lambda^x)_{x \in \cGu}$ on a Borel semigroupoid $\cG$ is called {\it proper} if there exists an increasing sequence of Borel subsets $A_n$ of $\cG$ with $\cG = \bigcup_n A_n$ such that for each $n \in \Nz$, the map $x\in \cGu \ma \lambda^x(A_n)\in \Rz$ is bounded.
\end{definition}

\begin{definition} \rm
A {\it measure semigroupoid} is a triple $(\cG,\lambda,\mu)$ consisting of a Borel semigroupoid $\cG$, a proper quasi-Haar system $\lambda = (\lambda^x)_{x \in \cGu}$ on $\cG$ and a positive Borel measure $\mu$ on $\cGu$.
\end{definition}

In the topological setting, we shall denote by $(\cG,\lambda)$ a locally compact semigroupoid  $\cG$ equipped with a quasi-Haar system $\lambda = (\lambda^x)_{x \in \cGu}$ on $\cG$.

\bremark
\label{ex_haar}
If $\lambda$ is a left Haar system on a Borel (or locally compact) groupoid, then $\lambda$ is a quasi-Haar system since $\gamma \lambda^{\bfs(\gamma)} = \lambda^{\bft(\gamma)}$. An important example of a measure semigroupoid is the  {\it measured groupoid} $(\cG,\lambda,\mu)$ studied in \cite{AR,K2} in which the Borel system $\lambda$ is assumed to be a left Haar system. Likewise, a continuous system $\lambda$ of measures on a locally compact groupoid $(\cG, \lambda)$ considered in \cite{AR,K2} is always a left Haar system. Our notion of a quasi-Haar system is a weaker version of a Haar system in the groupoid case. However, this weak version is already sufficient for our purpose and has wider applicability.

Let us now consider the case of the Borel semigroupoid $X \rtimes S$ attached to a Borel transformation semigroup $X \curvearrowleft S$.
\begin{definition}\rm
A positive Borel measure $\lambda$ on a topological semigroup $S$ is called {\it quasi-invariant} if $s\lambda \prec \lambda$ for all $s\in S$, where $s \lambda$ is the left translate of $\lambda$, which is the measure induced by the left translation $t\in S \ma st\in  S$.
\end{definition}
If $S$ is a group, this is equivalent to saying that all translates of $\lambda$ are mutually equivalent. Thus our definition extends the classical one (cf.\cite[p.58]{fo}). Let $\lambda$ be a quasi-invariant positive $\sigma$-finite Borel measure on $S$. Suppose that the map $(s,t)\in S \times S \ma \rukl{d (s \lambda) / d \lambda}(t)\in [0, \infty]$ is Borel. Then $\lambda^x \defeq \delta_x \times \lambda$ is a quasi-Haar system on $X \rtimes S$ as  $(x,s)(\delta_{x.s} \times \lambda) = \delta_x \times (s \lambda)$. Here is a concrete class of examples: Suppose that $G$ is a locally compact group, acting on an analytic Borel space $X$ with a Borel action $(x,g)\in X \times G \ma x.g\in X$.   Let $\lambda_G$ be the Haar measure on $G$. If $S$ is a Borel subsemigroup of $G$ containing the identity such that $\lambda_G(S) \neq 0$, then the restriction $\lambda$ of $\lambda_G$ to $S$ satisfies $s \lambda \prec \lambda$ for all $s \in S$ and $\rukl{d (s \lambda) / d \lambda}(t) = \chi_S(s^{-1}t)$. Therefore $\lambda^{x} \defeq \delta_{x} \times \lambda$ gives rise to a quasi-Haar system on $X \rtimes S$. In particular, if $X = \gekl{\rm pt}$, this allows us to view the semigroup $S \cong \gekl{\rm pt} \rtimes S$ as a measure semigroupoid. Of course, another class of examples is given by the case when $S$ is discrete (and countable) and $\lambda$ is the counting measure.
\eremark

\subsection{Reiter's condition}

\begin{definition} \rm Let $\cG$ be a Borel (resp. topological) semigroupoid, and $\lambda$ a quasi-Haar system on $\cG$. A Borel (resp. continuous) system $\theta = (\theta^x)_{x \in \cGu}$ of probability measures on $\cG^x$  is called {\it $\lambda$-adapted} if
\bgl
\label{abscont-RN}
  \theta^x \prec \lambda^x {\rm \ for \ every \ } x \in \cGu, \quad {\rm and ~ the~ map} \quad (\gamma,\eta)\in\cG^2 \ma {\frac{d (\gamma \theta^{\bfs(\gamma)})} {d \lambda^{\bft(\gamma)}}} (\eta) \in [0, \infty] \,  \ {\rm is \ Borel}.
\egl
\end{definition}
Note that absolute continuity in \eqref{abscont-RN} and Definition~\ref{lacm} imply that $\gamma \theta_n^{\bfs(\gamma)} \prec \gamma \lambda^{\bfs(\gamma)} \prec \lambda^{\bft(\gamma)}$ and we may consider the Radon-Nikodym derivative in \eqref{abscont-RN}.

Given a measure semigroupoid $(\cG,\lambda,\mu)$, we define a Borel measure $\mu \star\lambda  $ on $\cG$  by
$$\mu \star\lambda (E) = \int _{\cG} \lambda^x(E) d\mu(x) \in [0, \infty]$$
for each Borel set $E$ in $\cG$. For $f \in L^1(\cG, \mu \star \lambda)$, we write

$$
  \spkl{\mu\star \lambda  ,f} \defeq \int_{\cG} f d (\mu\star \lambda) =\int_{\cGu} \rukl{ \int_{\cG^x} f d \lambda^x} d \mu(x).
$$

\begin{definition} \rm A measure semigroupoid $(\cG,\lambda,\mu)$ is said to satisfy {\it Reiter's condition}, or called {\it Reiter}, if there exists a sequence $(\theta_n)_n$ of $\lambda$-adapted Borel systems of probability measures $\theta_n = (\theta_n^x)_{x \in \cGu}$ such that
\bgl
\label{Rm}
  \lim_{n \to \infty} \int_{\cG} \norm{\theta_n^{\bft(\gamma)} - \gamma \theta_n^{\bfs(\gamma)}} f(\gamma) d (\mu\star\lambda)(\gamma) = 0 \ {\rm for \ all} \ f \in L^1(\cG, \mu\star\lambda).
\egl
\end{definition}

\begin{remark}\label{8-7}\rm
We observe that $\|\theta_n^{\bft(\gamma)} - \gamma \theta_n^{\bfs(\gamma)}\|\leq 2$ and by the Lebesgue convergence theorem, the following condition implies (\ref{Rm}):
\bgl
\label{Rm'}
  \lim_{n \to \infty} \norm{\theta_n^{\bft(\gamma)} - \gamma \theta_n^{\bfs(\gamma)}} = 0 \ {\rm for} \ \mu {\rm \text{-}a.e.} \ x \in \cGu \ {\rm and} \ \lambda^x {\rm \text{-}a.e.} \ \gamma \in \cG^x.
\egl
\end{remark}
Given a continuous system of measures $\theta = (\theta^x)_{x \in \cGu}$ on a locally compact semigroupoid $\cG$ and a compact subset $K \subseteq \cGu$, we set $\suppc_K \theta \defeq \overline{\bigcup_{x \in K} \supp\theta^x}$.

\begin{definition}
\label{def-Reiter_t}
\rm
A locally compact semigroupoid $(\cG,\lambda)$ with quasi-Haar system $\lambda$ is said to satisfy {\it Reiter's condition} if there exists a sequence $(\theta_n)_n$ of $\lambda$-adapted continuous systems of probability measures $\theta_n = (\theta_n^x)_{x \in \cGu}$ such that
\bgl
\label{Rt}
  \lim_{n \to \infty} \norm{\theta_n^{\bft(\gamma)} - \gamma \theta_n^{\bfs(\gamma)}} = 0 \ {\rm for \ all} \ \gamma \in \cG.
\egl

We say that $(\cG,\lambda)$ satisfies the {\it uniform Reiter's condition} if the convergence in \eqref{Rt} is uniform on compact subsets of $\cG$.

If for every compact subset $K$ of $\cGu$, the support $\suppc_K(\theta_n)$ is compact for each $\theta_n$ in (\ref{Rt}), we say that
$(\cG,\lambda)$ satisfies the {\it tight Reiter's condition}.
\end{definition}
We also call $(\cG,\lambda)$  {\it Reiter} (respectively,  {\it uniform Reiter} or {\it tight Reiter}) if it satisfies Reiter's condition (respectively, the uniform Reiter's condition or the tight Reiter's condition).

\bremark
A measured groupoid $(\cG,\lambda,\mu)$, where $\lambda$ is a left Haar system, is Reiter in our sense if and only if it is amenable (see \cite[Chapter~3]{AR}). A locally compact groupoid $(\cG,\lambda)$, where $\lambda$ is a left Haar system, is uniform Reiter in our sense if and only if $\cG$ is amenable (see \cite[Chapter~2]{AR}). Moreover, by \cite[Theorem~2.14]{Rnew}, we know that in the case of locally compact groupoids, $(\cG,\lambda)$ is Reiter if and only if it is uniform Reiter. We will see later that for such groupoids, Reiter's condition is also equivalent to the tight Reiter's condition.

Let us now consider the case of the semigroupoid $X \rtimes S$ attached to a transformation semigroup $X \curvearrowleft S$, where $S$ is a countable discrete semigroup. For our quasi-Haar system, we always choose $\lambda^x = \delta_x \times \lambda$, where $\lambda$ is the counting measure. Going through our definitions, we see that in the Borel case, $(X \rtimes S,\lambda,\mu)$ is Reiter if there exists a sequence $(\theta_n)_n$ of families $\theta_n = (\theta_n^x)_{x \in X}$ of probability measures $\theta_n^x$ on $S$ such that for every $n \in \Nz$ and $s \in S$, the map $x\in X \ma \theta_n^x(s)\in \Rz$ is Borel and
$$\lim_{n \to \infty} \int_X \sum_{s \in S} \norm{\theta_n^x - s \theta_n^{x.s}} f(x,s) d \mu(x) = 0 \quad
\mbox{for all $f \in L^1(X \times S,\mu \times \lambda)$}.$$
In the locally compact case, our definition says that $(X \rtimes S,\lambda)$ is Reiter if there exists a sequence $(\theta_n)_n$ of families $\theta_n = (\theta_n^x)_{x \in X}$ of probability measures $\theta_n^x$ on $S$ such that for every $n \in \Nz$, the map $x\in X \ma \theta_n^x\in \ell^1(S)$ is continuous and $$\lim_{n \to \infty} \norm{\theta_n^x - s \theta_n^{x.s}} = 0 \quad
\mbox{for all $x \in X$ and $s \in S$}.$$
If the convergence is uniform on compact subsets of $X$, then $(X \rtimes S,\lambda)$ is uniform Reiter, and if   $\suppc_K \theta_n \defeq \bigcup_{x \in K} \supp\theta_n^x \subseteq S$ is finite for every compact subset $K$ of $X$, then $(X \rtimes S,\lambda)$ is tight Reiter.
\eremark

\subsection{Liouville property}
\label{defL}

Let $(\cG,\lambda,\mu)$ be a measure semigroupoid and $\pi = (\pi_x)_{x \in \cGu}$ a Borel system of probability measures. By natural extension as before, we often regard the measure $\pi_x$ on $\cG^x$ as a measure on $\cG$ vanishing on
$\cG \backslash \cG^x$. 
Given  $\pi_x \prec \lambda^x$ for all $x \in \cGu$, we can define, for every $x \in \cGu$, the {\it fibrewise Markov operator} $P_x: \: L^{\infty}(\cG^x,\lambda^x) \to L^{\infty}(\cG^x,\lambda^x)$ by
$$ P_x(f)(\zeta) = \int_{\cG^{\bfs(\zeta)}} f(\zeta \eta) d \pi_{\bfs(\zeta)}(\eta) \qquad (\zeta \in \cG^x).$$
Extending Definition \ref{ph}, a Borel function $f : \cG^x \rightarrow \mathbb{C}$ is called {\it $P_x$-harmonic} if
$P_x(f) =f$.
Let $H_{\pi,x}(\cG) \defeq \menge{f \in L^{\infty}(\cG^x,\lambda^x)}{P_x(f) = f}$ be the space of bounded $P_x$-harmonic functions on the fibre $\cG^x$.

For a locally compact semigroupid $(\cG, \lambda)$ with a $\lambda$-adapted continuous system $\pi$ of probability measures, the space $H_{\pi,x}(\cG)$ is defined analogously.

\begin{definition} \rm
A measure semigroupoid $(\cG,\lambda,\mu)$ is said to have the {\it Liouville property}, or called {\it Liouville}, if there exists a $\lambda$-adapted Borel system of probability measures $\pi = (\pi_x)_{x\in \cGu}$ such that $H_{\pi,x}(\cG) = \Cz\mathbf 1$ for $\mu$-a.e. $x \in \cGu$, where $\mathbf{1}$ is the constant function on $\cG^x$ with value $1$.
\end{definition}

\begin{definition}
\label{def-L_t}
\rm
A locally compact semigroupoid $(\cG,\lambda)$ with quasi-Haar system $\lambda$ is said to have the {\it Liouville property}, or called
 {\it Liouville}, if there exists a $\lambda$-adapted continuous system of probability measures $\pi = (\pi_x)_{x\in \cGu}$ such that $H_{\pi,x}(\cG) = \Cz\mathbf 1$ for all $x \in \cGu$.

We say that $(\cG,\lambda)$ has the {\it continuous Liouville property}, or is {\it continuous Liouville}, if  the system $\pi$ above also satisfies the condition
 that the map $(\gamma,\eta)\in\cG^2 \ma \rukl{d (\gamma \pi_{\bfs(\gamma)}) / d \lambda^{\bft(\gamma)}} (\eta)
 \in \Rz$ is bounded and continuous.

The semigroupoid $(\cG,\lambda)$ is said to have the {\it tight Liouville property} or called
 {\it tight Liouville}, if it is Liouville
 with the system $\pi$ satisfying the condition that for every compact subset $K$ of $\cGu$, $\suppc_K(\pi)$ is compact.
\end{definition}
For groupoids, these definitions coincide with the ones given in \cite{K2}.

\begin{example}\rm
In the case of the semigroupoid $X \rtimes S$ attached to a transformation semigroup $X \curvearrowleft S$, where $S$ is a discrete semigroup, the fibrewise Markov operator $P_x: \: \ell^{\infty}(S) \to \ell^{\infty}(S)$ is given by  $P_x(f)(r) = \sum_{s \in S} f(rs) \pi_{x.r}(s)$. In particular, this is just the convolution $f*\pi$ on the semigroup $S$ for $X=\{{\rm pt}\}$, in which case the Livouville property states that all bounded $\pi$-harmonic functions on $S$ are constant.
\end{example}

\section{Convolution of systems of measures}
\label{sgpd1}

In this section, we derive some basic results needed later for the convolution of two systems of measures on semigroupoids.
Let $\cG$ be a Borel semigroupoid and $\pi = (\pi_x)_{x \in \cGu}$ a Borel system of probability measures on $\cG$.
Given  a positive Borel measure $\rho$ on $\cG^x$ for some $x \in \cGu$, we first define the
 convolution $\rho * \pi$ of $\rho$ with the system $\pi$ as the following Borel measure on $\cG^x$:
$$\rho * \pi (E) = \int_{\cG^x}\pi_{\bfs(\zeta)}\{\eta\in \cG^{\bfs(\zeta)}: \zeta\eta \in E\} d\rho(\zeta)$$
for each Borel set $E$ in $\cG^x$.
For $f\in L^1(\cG^x, \rho * \pi) $, we have
$$
  \spkl{\rho * \pi,f} \defeq \int_{\cG^x} f d (\rho * \pi)
  \defeq \int_{\cG^x} \rukl{ \int_{\cG^{\bfs(\zeta)}} f(\zeta \eta) d \pi_{\bfs(\zeta)}(\eta) } d \rho(\zeta).
$$
 Now for a Borel system $\rho = (\rho_x)_{x \in \cGu}$ of measures on $\cG$, we define $(\rho * \pi)_x \defeq \rho_x * \pi$ and write $\rho * \pi = ((\rho * \pi)_x)_{x \in \cGu}$ for the family of Borel measures $(\rho * \pi)_x$ on $\cG^x$.

For a locally compact semigroupoid $\cG$ and a continuous system $\pi$ of probability measures on $\cG$, one can define analogously the convolution $\rho * \pi$ for a Borel measure $\rho$ on some $\cG^x$, and for a continuous system
$\rho$ of measures on $\cG$.

\bremark The above definition of the convolution $\rho * \pi$ coincides with the one for groupoids given
by Connes \cite[p.11]{Con1}.
\eremark


In the case of the semigroupoid $X \rtimes S$ attached to a transformation semigroup $X \curvearrowleft S$, where $S$ is a discrete semigroup, the convolution  $\rho * \pi$ is given by $(\rho * \pi)(t) = \sum_{\substack{r,s \in S \\ rs = t}} \pi_{x.r}(s) \rho(r)$.

We collect below some simple facts about convolutions of measures for later use. For a Borel semigroupoid $\cG$, we
denote by $B_b(\cG^x)$ the algebra of bounded Borel functions on a fibre $\cG^x$, equipped with the supremum norm
$\|\cdot\|_\infty$.
We recall that the total variation norm of a finite Borel measure $\rho$ on  $\cG^x$ is given by
$\|\rho\| = \sup\{|\int_{\cG^x} f d\rho| : f\in B_b(\cG^x), \|f\|_\infty \leq 1\}$.

\blemma
\label{lemma_conv-meas1} Let $\cG$ be a Borel or locally compact semigroupoid and $\pi =(\pi_x)_{x\in \cGu}$
a Borel system of probability measures on $\cG$.
\begin{itemize}
  \item[\rm a)] If $\rho$ is a finite Borel measure on $\cG^x$ for some $x \in \cGu$, then $\norm{\rho * \pi} \leq \norm{\rho}$.
  \item[\rm b)] If $\rho$ is a probability measure on $\cG^x$ ($x \in \cGu$), then $\norm{\pi_x - \rho * \pi} \leq \int_{\cG^x} \norm{\pi_x - \zeta \pi_{\bfs(\zeta)}} d \rho(\zeta)$.
\end{itemize}
\elemma
\bproof\noindent\begin{itemize}
\item[  a)] Given $f \in B_b(\cG^x)$, the map $f_{\pi}: \zeta \in \cG \mapsto \int_{\cG^{\bfs(\zeta)}} f(\zeta \eta) d \pi_{\bfs(\zeta)}(\eta)  $ is a bounded Borel function, with $\norm{f_{\pi}}_{\infty} \leq \norm{f}_{\infty}$. Hence $\abs{\spkl{\rho * \pi,f}} = \abs{\spkl{\rho,f_{\pi}}} \leq \norm{\rho} \cdot \norm{f_{\pi}}_{\infty} \leq \norm{\rho} \cdot \norm{f}_{\infty}$ and the result follows.
  \item[ b)] Let $\ve > 0$. Pick $f \in B_b(\cG^x)$ with $\norm{f}_{\infty} = 1$ and $\abs{\spkl{\pi_x - \rho * \pi,f}} \geq \norm{\pi_x - \rho * \pi} - \ve$. As $\rho$ is a probability measure, we have
\bglnoz
  \norm{\pi_x - \rho * \pi}
  &\leq& \ve + \abs{ \int_{\cG^x} f(\eta) d \pi_x(\eta) - \int_{\cG^x} \rukl{ \int_{\cG^{\bfs(\zeta)}} f(\zeta \eta) d \pi_{\bfs(\zeta)}(\eta) } d \rho(\zeta) } \\
   &\leq& \ve + \int_{\cG^x} \abs{ \int_{\cG^x} f(\eta) d \pi_x(\eta) - \int_{\cG^{\bfs(\zeta)}} f(\zeta \eta) d \pi_{\bfs(\zeta)}(\eta) } d \rho(\zeta) \\
  &\leq& \ve + \int_{\cG^x} \norm{ \pi_x - \zeta \pi_{\bfs(\zeta)} } d \rho(\zeta)
\eglnoz
where $\zeta\pi_{\bfs(\zeta)}$ is a measure on $\cG^{\bft(\zeta)} = \cG^x$. This completes the proof.
\end{itemize}
\eproof

\blemma
\label{lemma_conv-meas2}
Let $\cG$ be a Borel semigroupoid with Borel systems $\pi = (\pi_x)_{x \in \cGu}$ and $\rho = (\rho_x)_{x \in \cGu}$ of  probability measures on $\cG$.
Let $\lambda = (\lambda^x)_{x \in \cGu}$ be a quasi-Haar system on $\cG$ in b) and c) below.
\begin{itemize}
\item[\rm a)] $\rho * \pi$ is a Borel system of probability measures on $\cG$.
  \item[\rm b)]  If $\pi_y \prec \lambda^y$ for all $y \in \cGu$, then for every $x \in \cGu$ and every Borel measure $\sigma$ on $\cG^x$, we have $\sigma * \pi \prec \lambda^x$.
  \item[\rm c)] If $\rho_x \prec \lambda^x$ for all $x \in \cGu$, and if both maps $(\gamma,\eta) \in \cG^2 \ma \rukl{ d (\gamma \pi_{\bfs(\gamma)}) / d \lambda^{\bft(\gamma)} }(\eta)   $ and $(\gamma,\eta) \in\cG^2 \ma \rukl{ d (\gamma \rho_{\bfs(\gamma)}) / d \lambda^{\bft(\gamma)} }(\eta)$ are Borel, then so is the map $(\gamma,\eta)\in\cG^2 \ma \rukl{ d (\gamma (\rho * \pi)_{\bfs(\gamma)}) / d \lambda^{\bft(\gamma)} }(\eta)$.
\end{itemize}
\elemma
\bproof\noindent
\begin{itemize}
  \item[ a)] Given a non-negative Borel function $f$ on $\cG$,  the non-negative map $\zeta \in \cG \ma \spkl{\zeta \pi_{\bfs(\zeta)},f}  $ is Borel. Therefore the map $ x\in \cGu \ma \int_{\cG^x} \spkl{\zeta \pi_{\bfs(\zeta)},f} d \rho_x(\zeta) $ is Borel since $\rho$ is a Borel system of measures.
  \item[ b)] This follows from $\spkl{\sigma * \pi,f} = \int_{\cG^x} \rukl{ \int_{\cG^x} f d (\zeta \pi_{\bfs(\zeta)}) } d \sigma(\zeta)$ for each non-negative Borel function $f$ on $\cG^x$, and $\zeta \pi_{\bfs(\zeta)} \prec \zeta \lambda^{\bfs(\zeta)} \prec \lambda^{\bft(\zeta)} = \lambda^x$.
  \item[ c)] Let $u_{\gamma} \defeq d (\gamma \pi_{\bfs(\gamma)}) / d \lambda^{\bft(\gamma)}$ and $v_{\gamma} \defeq d (\gamma \rho_{\bfs(\gamma)}) / d \lambda^{\bft(\gamma)}$. Then for each non-negative Borel function $f$ on $\cG$ and $\gamma \in \cG$ with $\bfs(\gamma) = x$, we have
\bglnoz
  \spkl{\gamma (\rho * \pi)_x,f}
  &=& \int_{\cG^x} \rukl{ \int_{\cG} f(\gamma \zeta \eta) d \pi_{\bfs(\zeta)}(\eta) } d \rho_x(\zeta)
  = \int_{\cG} \rukl{ \int_{\cG} f(\xi \eta) d \pi_{\bfs(\xi)}(\eta) } d (\gamma \rho_x)(\xi) \\
  &=& \int_{\cG} \rukl{ \int_{\cG} f d (\xi \pi_{\bfs(\xi)}) } d (\gamma \rho_x)(\xi)
  = \int_{\cG} \rukl{ \int_{\cG} f u_{\xi} d \lambda^{\bft(\xi)} } v_{\gamma}(\xi) d \lambda^{\bft(\gamma)}(\xi) \\
  &=& \int_{\cG} f(\omega) \rukl{ \int_{\cG} u_{\xi}(\omega) v_{\gamma}(\xi) d \lambda^{\bft(\gamma)}(\xi) }  d \lambda^{\bft(\gamma)}(\omega).
\eglnoz
Hence we have $\rukl{ d (\gamma (\rho * \pi)_{\bfs(\gamma)}) / d \lambda^{\bft(\gamma)} }(\omega) = \int_{\cG} u_{\xi}(\omega) v_{\gamma}(\xi) d \lambda^{\bft(\gamma)}(\xi)$. It follows that the map $(\gamma,\omega)\in \cG^2 \ma \rukl{ d (\gamma (\rho * \pi)_{\bfs(\gamma)}) / d \lambda^{\bft(\gamma)} }(\omega) $ is Borel.

\end{itemize}
\eproof
\blemma
\label{lemma_conv-meas3} Let $\cG$ be a locally compact semigroupoid with two continuous systems $\pi= (\pi_x)_{x \in \cGu}$ and $\rho = (\rho_x)_{x \in \cGu}$  of probability measures on $\cG$. Then
\begin{itemize}
\item[\rm a)] $\rho * \pi$ is a continuous system of probability measures.
  \item[\rm b)]  If $\pi$ and $\rho$ have the property that for every compact subset $K \subseteq \cGu$, the supports $\suppc_K(\pi)$ and $\suppc_K(\rho)$ are compact, then $\rho * \pi$ has this property as well.
\item[\rm c)] Let $\lambda = (\lambda^x)_{x \in \cGu}$ be a quasi-Haar system on $\cG$.  If $\pi_x \prec \lambda^x$ and $\rho_x \prec \lambda^x$ for all $x \in \cGu$, and if both maps $(\gamma,\eta) \in \cG^2 \ma \rukl{ d (\gamma \pi_{\bfs(\gamma)}) / d \lambda^{\bft(\gamma)} }(\eta) \in \Rz$ and $(\gamma,\eta)\in\cG^2 \ma \rukl{ d (\gamma \rho_{\bfs(\gamma)}) / d \lambda^{\bft(\gamma)} }(\eta) \in \Rz$ are bounded and continuous, then the map $(\gamma,\eta)\in \cG^2 \ma \rukl{ d (\gamma (\rho * \pi)_{\bfs(\gamma)}) / d \lambda^{\bft(\gamma)} }(\eta) \in \Rz$ is also bounded and continuous.
\end{itemize}
\elemma

\bproof\noindent\begin{itemize}
  \item[ a)] Since $\cG$ is locally compact and second countable, we know that the map $x\in\cGu \ma \spkl{\pi_x,f}\in \Cz $ is continuous for every $f \in C_b(\cG)$ (not only for $f \in C_c(\cG)$), and likewise for $\rho$. A similar argument as in the proof of \cite[Chapter~III, Lemma~1.1]{para} shows that for every $f \in C_b(\cG)$, the map $\zeta\in \cG \ma \spkl{\zeta \pi_{\bfs(\zeta)},f}\in \Cz$ is continuous (and obviously bounded, too). Hence, as $\rho$ is a continuous system of measures, we conclude that the map $x\in \cGu \ma \spkl{(\rho * \pi)_x,f} = \int_{\cG^x} \spkl{\zeta \pi_{\bfs(\zeta)},f} d \rho_x(\zeta)\in \Cz$  is continuous.
  \item[ b)] Given two subsets $A$ and $B$ of $\cG$, we write $A \cdot B$ for the image of $(A \times B) \cap \cG^{(2)}$ under the composition map $ \: \cG^{(2)} \to \cG$. Obviously, $A \cdot B$ is compact if both $A$ and $B$ are compact. Now given a compact subset $K$ of $\cGu$, it is obvious that $\suppc_K(\rho * \pi) \subseteq \suppc_K(\rho) \cdot \suppc_K(\pi)$.

  \item[ c)] This follows from the same computation as in the proof of Lemma \ref{lemma_conv-meas2} c) and a similar argument to the proof of \cite[Chapter~III, Lemma~1.1]{para}.

\end{itemize}
\eproof

\section{Liouville property implies Reiter's condition}
\label{LR_sgpd}

We are now ready to reveal the relationship between Reiter's condition and the Liouville property for semigroupoids. We do so in the measurable and topological setting.
 In this section, we prove that for semigroupoids equipped with a quasi-Haar system, the Liouville property implies Reiter's condition. This extends two results of Kaimanovich (Theorem~4.2 and Theorem~6.1 in \cite{K2}) for groupoids with a Haar system. We first establish the connection between our setting and the one in \cite[\S~3]{K2}.

Let $(\cG,\lambda,\mu) $ be a measure semigroupoid and $\pi = (\pi_x)_{x\in \cGu}$  a $\lambda$-adapted
Borel system of probability measures on $\cG$. We observe that, for every $x \in \cGu$, the fibrewise Markov operator $P_x: L^\infty(\cG^x,\lambda_x) \rightarrow L^\infty(\cG^x,\lambda_x)$ has a predual operator given by
 $\theta \in L^1(\cG^x,\lambda_x) \ma \theta P_x:=\theta * \pi\in L^1(\cG^x,\lambda_x)$   since
$$\spkl{\theta,P_x(f)} = \int_{\cG^x} \rukl{ \int_{\cG} f(\zeta \eta) d \pi_{\bfs(\zeta)}(\eta) } d \theta(\zeta) = \spkl{\theta * \pi,f} \qquad (f \in L^\infty(\cG^x,\lambda_x))$$
(cf. \cite[\S~3]{K2}). We have the $k$-th iterate $\theta P_x^k = \theta * \pi^k$.

\btheo
\label{L-->R_m}
Let $(\cG,\lambda,\mu)$ be a measure semigroupoid. If $(\cG,\lambda,\mu)$ is Liouville. Then it is Reiter.
\etheo
\bproof
Let $\pi = (\pi_x)_{x\in \cGu}$ be a $\lambda$-adapted Borel system of probability measures such that $H_{\pi,x}(\cG) = \Cz\mathbf 1$ for $\mu$-a.e. $x \in \cGu$. Moreover, let $\theta = (\theta^x)_{x \in \cGu}$ be any $\lambda$-adapted Borel system of probability measures. For instance, we could take $\theta^x = \pi_x$. Set $\theta_n^x \defeq \tfrac{1}{n+1} \sum_{k=0}^n \theta^x * \pi^k$. This is, for every $n \in \Nz$, a $\lambda$-adapted Borel system of probability measures by Lemma~\ref{lemma_conv-meas2}~a), b) and c). We have for every $\gamma \in \cG$:
\bgloz
  \norm{\theta_n^{\bft(\gamma)} - \gamma \theta_n^{\bfs(\gamma)}}
  = \norm{ \tfrac{1}{n+1} \sum_{k=0}^n
  \theta^{\bft(\gamma)} * \pi^{k} - \gamma(\theta^{\bfs(\gamma)} * \pi^{k})
  }
  = \norm{ \tfrac{1}{n+1} \sum_{k=0}^n
  (\theta^{\bft(\gamma)} - \gamma \theta^{\bfs(\gamma)}) * \pi^{k}
  }.
\egloz
By assumption, we have $H_{\pi,x}(\cG) = \Cz \mathbf{1}$ for $\mu$-a.e. $x \in \cGu$. In the language of \cite{K1}, this means that for $\mu$-a.e. $x \in \cGu$, the Poisson boundary of $P_x$ is trivial. Hence \cite[Theorem~2.8]{K1} implies that for $\mu$-a.e. $x \in \cGu$ and $\lambda^x$-a.e. $\gamma \in \cG^x$, we have
\bgloz
  \lim_{n \to \infty}
  \norm{ \tfrac{1}{n+1} \sum_{k=0}^n
  (\theta^{\bft(\gamma)} - \gamma \theta^{\bfs(\gamma)}) * \pi^{k}
  } = \lim_{n \to \infty} \norm{ \tfrac{1}{n+1} \sum_{k=0}^n
  (\theta^{\bft(\gamma)}P_{\bft(\gamma)}^k - \gamma \theta^{\bfs(\gamma)}P_{\bft(\gamma)}^k)
  } = 0.
\egloz
By Remark \ref{8-7}, $(\cG,\lambda,\mu)$ is Reiter.
  \eproof

In the topological setting, we have the following analogous result.

\btheo
\label{L-->R_t}
Let $(\cG,\lambda)$ be a locally compact semigroupoid with quasi-Haar system $\lambda$.
\begin{itemize}
  \item[\rm 1)] If $(\cG,\lambda)$ is Liouville, then it is Reiter.
  \item[\rm 2)] If $(\cG,\lambda)$ is tight Liouville, then it is tight Reiter.
  \item[\rm 3)] If $(\cG,\lambda)$ is continuous Liouville, then it is uniform Reiter.
\end{itemize}
\etheo
\bproof
For 1), let $\pi = (\pi_x)_{x \in \cGu}$ be a $\lambda$-adapted continuous system of probability measures on $\cG$ such that $H_{\pi,x}(\cG) = \Cz\mathbf 1$ for all $x \in \cGu$. Define a new family of measures $\rho = (\rho_x)_{x \in \cGu}$ by setting $\rho_x \defeq \tfrac{1}{2}(\pi_x + (\pi * \pi)_x)$. By Lemma~\ref{lemma_conv-meas2}~b), c) and Lemma \ref{lemma_conv-meas3}~a),  $\rho$ is again a $\lambda$-adapted continuous system of probability measures on $\cG$. Moreover, let $\theta = (\theta^x)_{x \in \cGu}$ be any $\lambda$-adapted continuous system of probability measures. For instance, we could take $\theta^x = \pi_x$. We define, for each $n \in \Nz$, $\theta_n \defeq \theta * \rho^{n}$. By our assumption, $H_{\pi,x}(\cG) = \Cz \mathbf{1}$ for all $x \in \cGu$, or in the language of \cite{K1}, the Poisson boundary of $P_x$ is trivial for all $x \in \cGu$. Hence, combining \cite[Theorem~2.6]{K1} and \cite[Theorem~2.7]{K1} as in \cite[Proof of Theorem~6.1]{K2}, we obtain for every $\gamma \in \cG$:
\bgln
  \lim_{n \to \infty} \norm{\theta_n^{\bft(\gamma)} - \gamma \theta_n^{\bfs(\gamma)}}
  &=& \lim_{n \to \infty} \norm{(\theta^{\bft(\gamma)} - \gamma \theta^{\bfs(\gamma)}) * \rho^n} \nonumber \\
\label{L-->R_t-conv}
  &=& \lim_{n \to \infty} \norm{2^{-n} \sum_{k=0}^n \binom{n}{k} (\theta^{\bft(\gamma)} - \gamma \theta^{\bfs(\gamma)}) * \pi^{n+k}} = 0.
\egln
This proves 1).

For 2), note that if we choose $\theta$ and $\pi$ with the property that $\suppc_K(\theta)$ and $\suppc_K(\pi)$ are compact for all compact subsets $K \subseteq \cGu$, then for every $n \in \Nz$, $\theta_n$ has the same property due to Lemma~\ref{lemma_conv-meas3}~b).

For 3), we choose $\theta$ and $\pi$ such that $\cG^2 \to \Rz, \, (\gamma,\eta) \ma \rukl{ d (\gamma \theta_{\bfs(\gamma)}) / d \lambda^{\bft(\gamma)} }(\eta)$ and $\cG^2 \to \Rz, \, (\gamma,\eta) \ma \rukl{ d (\gamma \pi_{\bfs(\gamma)}) / d \lambda^{\bft(\gamma)} }(\eta)$ are bounded and continuous. Then, by Lemma~\ref{lemma_conv-meas3}~c), we know that for every $n \in \Nz$, $\theta_n$ has the same property. Hence for every $n \in \Nz$, the map $\gamma\in \cG \ma  \norm{\theta_n^{\bft(\gamma)} - \gamma \theta_n^{\bfs(\gamma)}}
\in \Rz$ is continuous. Moreover, these maps form a decreasing sequence (in $n$) by Lemma~\ref{lemma_conv-meas1}~a). Therefore, Dini's theorem implies that the convergence in \eqref{L-->R_t-conv} is uniform on compact subsets of $\cG$.
\eproof

\section{Reiter's condition implies Liouville property}
\label{RL_sgpd}

We are going to prove that Reiter's condition implies the Liouville property for semigroupoids,
both in the measurable and topological setting. In the special
case of groupoids, this proves Kaimanovich's conjecture and the details will be given in the
next section. A crucial construction in the proof is to replace, in Reiter's condition on the semigroupoid $\cG$,
the Borel systems $(\theta_n)_{n\in \mathbb{N}}$ of approximately invariant measures  by a single system $\pi = (\pi_x)_{x\in \cGu}$ so that the convolution powers $(\pi_x^n)_{x \in \cGu}$ play the role of the sequence
$(\theta_n^x)_{x \in \cGu}$. In the proof of Proposition~\ref{mgpd-prop-am-->L}, we follow the strategy adopted in \cite{KV} for the case of discrete groups.  We begin with measure semigroupoids.


\bprop
\label{mgpd-prop-am-->L}
Let $(\cG,\lambda,\mu)$ be a measure semigroupoid. If $(\cG,\lambda,\mu)$ is Reiter, then there exists a $\lambda$-adapted Borel system of probability measures $\pi = (\pi_x)_{x \in \cGu}$ on $\cG$ such that for $\mu$-a.e. $x \in \cGu$ and $\lambda^x$-a.e. $\gamma \in \cG^x$, we have $\lim_{i \to \infty} \norm{\pi^{i}_{\bft(\gamma)} - \gamma \pi^{i}_{\bfs(\gamma)}} = 0$.
\eprop
\bproof
Since $L^1(\cGu,\mu)$ is separable (see Remark~\ref{sep}), there exists a sequence of finite subsets $(\cF_i)_i$ of $L^1(\cGu,\mu)_+$ such that $\cF_1 \subseteq \cF_2 \subseteq \dotso$ and $\bigcup_{i=1}^{\infty} \cF_i$ is dense in $L^1(\cGu,\mu)_+$. Choose sequences $(t_i)_i$ and $(\ve_i)_i$ in $(0,1)$ such that $\sum_{i=1}^{\infty} t_i = 1$, $\ve_1 > \ve_2 > \ve_3 > \dotso$ and $\lim_{i \to \infty} \ve_i = 0$. As $\lambda$ is proper (Definition~\ref{proper}), we can find an increasing sequence of Borel subsets $A_i$ of $\cG$ such that $\cG = \bigcup_{i=1}^{\infty} A_i$ and
\begin{equation}\label{ci}
C_i \defeq \sup_{x \in \cGu} \lambda^x(A_i) \in (0, \infty).
\end{equation}
Choose a sequence $(n_i)_i$ of natural numbers such that $n_1 < n_2 < \dotso$ and $(t_1 + \dotsb + t_{i-1})^{n_i} < \tfrac{\ve_i}{C_i}$.

Let us now choose inductively $\lambda$-adapted Borel systems of probability measures $\theta_i = (\theta_i^x)_{x \in \cGu}$ on $\cG$. Let $\theta_1$ be any $\lambda$-adapted Borel system of probability measures on $\cG$. Now assume that $\theta_1, \dotsc, \theta_{m-1}$ have been chosen. Let
$$
\Theta_{m-1} \defeq \menge{\theta_{k_1} * \dotsb * \theta_{k_j}}{1 \leq j \leq n_m - 1, \, k_j \in \gekl{1, \dotsc, m-1}}.
$$
Every $\rho \in \Theta_{m-1}$ is $\lambda$-adapted by Lemma~\ref{lemma_conv-meas2}~c). Let $u^x \defeq d \rho^x / d \lambda^x$.
Then   $\int_{\cG^x} u^x(\zeta) d \lambda^x(\zeta) =\int_{\cG^x} d \rho^x(\zeta) =1$ as $\rho^x$ is a probability measure. For $f \in \cF_{m-1}$ and $\rho \in \Theta_{m-1}$, let $\varphi_{f,\rho}(\zeta) = u^{\bft(\zeta)} f(\bft(\zeta))$.
Then $\varphi_{f,\rho}$ is Borel because $\rho$ is $\lambda$-adapted, and we have $\varphi_{f,\rho} \in L^1(\cG, \mu\star\lambda)$ since
\bgloz
  \int_{\cGu} \rukl{ \int_{\cG^x} \varphi_{f,\rho}(\zeta) d \lambda^x(\zeta) } d \mu(x)
  = \int_{\cGu} \rukl{ \int_{\cG^x} u^x(\zeta) d \lambda^x(\zeta) } f(x) d \mu(x)
  =  \int_{\cGu} f(x) d \mu(x) < \infty.
\egloz

For each $\rho \in \Theta_{m-1}$, let $v_{\gamma}(\eta) \defeq \rukl{d (\gamma \rho^{\bfs(\gamma)}) / d \lambda^{\bft(\gamma)}} (\eta)$.  Since $\gamma \rho^{\bfs(\gamma)}$ is a probability measure, we have
\begin{equation}\label{v}
\int_{\cG^x} v_{\gamma}(\eta) d \lambda^x(\eta) =  \int_{\cG^x} d (\gamma \rho^{\bfs(\gamma)}) =1.
\end{equation}
For $f \in \cF_{m-1}$ and $\rho \in \Theta_{m-1}$, set
$
  \phi_{f,\rho}(\eta) = \int_{\cG} v_{\gamma}(\eta) 1_{A_m}(\gamma) f(\bft(\eta)) d \lambda^{\bft(\eta)}(\gamma)
$. Again, $\phi_{f,\rho}$ is Borel because $\rho$ is $\lambda$-adapted, and $\phi_{f,\rho}$ lies in $L^1(\cG, \mu\star\lambda)$
as (\ref{ci}) and (\ref{v}) imply
\bglnoz
  && \int_{\cGu} \rukl{ \int_{\cG^x} \phi_{f,\rho}(\eta) d \lambda^x(\eta) } d \mu(x) \\
  &=& \int_{\cGu} \rukl{ \int_{\cG^x} \rukl{ \int_{\cG^{\bft(\eta)}} v_{\gamma}(\eta) 1_{A_m}(\gamma) f(\bft(\eta)) d \lambda^{\bft(\eta)}(\gamma) } d \lambda^x(\eta) } d \mu(x) \\
  &=& \int_{\cGu} \rukl{ \int_{\cG^x} \rukl{ \int_{\cG^x} v_{\gamma}(\eta) d \lambda^x(\eta) } 1_{A_m}(\gamma)
  d \lambda^x(\gamma) } f(x) d \mu(x)\\
  &\leq& C_m \cdot \int_{\cGu} f(x) d \mu(x) < \infty.
\eglnoz

Since $(\cG, \lambda,\mu)$ is Reiter,  we can choose $\theta_m$ such that
\bgl
\label{tm1}
  \int_{\cG} \norm{\theta_m^{\bft(\gamma)} - \gamma \theta_m^{\bfs(\gamma)}} 1_{A_m}(\gamma) f(\bft(\gamma)) d (\mu\star\lambda)(\gamma) < \ve_m \ {\rm for \ all} \ f \in \cF_{m-1},
\egl
\bgl
\label{tm2}
  \int_{\cG} \norm{\theta_m^{\bft(\zeta)} - \zeta \theta_m^{\bfs(\zeta)}} \varphi_{f,\rho}(\zeta) d (\mu\star\lambda)(\zeta) < \tfrac{\ve_m}{C_m} \ {\rm for \ all} \ f \in \cF_{m-1} \ {\rm and} \ \rho \in \Theta_{m-1}
\egl
and
\bgl
\label{tm3}
  \int_{\cG} \norm{\theta_m^{\bft(\eta)} - \eta \theta_m^{\bfs(\eta)}} \phi_{f,\rho}(\eta) d (\mu\star\lambda)(\eta) < \ve_m \ {\rm for \ all} \ f \in \cF_{m-1} \ {\rm and} \ \rho \in \Theta_{m-1}.
\egl

Now set $\pi_x \defeq \sum_{i=1}^{\infty} t_i \theta_i^x$ for all $x \in \cGu$. $\pi = (\pi_x)$ is again a $\lambda$-adapted Borel system of probability measures.

Take $m \in \Nz$ and $f \in \cF_{m-1}$. Set $n \defeq n_m$. Write the $n$-fold convolution $\pi^{n}_x = (\pi_1)_x + (\pi_2)_x$ where
\bgln
\label{pi1x}
  (\pi_1)_x &=& \sum_{\substack{\bk \in \Nz^n \\ \max(\bk) < m}}
  t_{k_1} \cdots t_{k_n} (\theta_{k_1} * \dotsb * \theta_{k_n})^x, \\
\label{pi2x}
  (\pi_2)_x &=& \pi^{n}_x - (\pi_1)_x = \sum_{\substack{\bk \in \Nz^n \\ \max(\bk) \geq m}}
  t_{k_1} \cdots t_{k_n} (\theta_{k_1} * \dotsb * \theta_{k_n})^x.
\egln
Then $\norm{(\pi_1)_x} \leq (t_1 + \dotsb + t_{m-1})^{n_m} < \tfrac{\ve_m}{C_m}$ for all $x \in \cGu$ and hence
\bgln
\label{mgpd-pi1}
  && \int_{\cGu} \rukl{ \int_{\cG^x} \norm{(\pi_1)_{\bft(\gamma)} - \gamma (\pi_1)_{\bfs(\gamma)}} 1_{A_m}(\gamma) d \lambda^x(\gamma) } f(x) d \mu(x) \\
  &<& \int_{\cGu} \rukl{ \int_{\cG^x} 2 \tfrac{\ve_m}{C_m} 1_{A_m}(\gamma) d \lambda^x(\gamma) } f(x) d \mu(x) \leq 2 \ve_m. \nonumber
\egln
For $\bk \in \Nz^n$ with $\max(\bk) \geq m$, $\theta_{k_1} * \dotsb * \theta_{k_n}$ is of the form $\theta * \tau$ for some $\theta = \theta_k$ with $k \geq m$, or it is of the form $\rho * \theta * \tau$ for some $\rho \in \Theta_{m-1}$ and $\theta = \theta_k$ with $k \geq m$.
By \eqref{tm1}, we have
\bgl
\label{mgpd-ineq1}
  \int_{\cGu} \rukl{ \int_{\cG^x} \norm{\theta^{\bft(\gamma)} - \gamma \theta^{\bfs(\gamma)}} 1_{A_m}(\gamma) d \lambda^x(\gamma) } f(x) d \mu(x) < \ve_m.
\egl
For every $x \in \cGu$, Lemma~\ref{lemma_conv-meas1}~b) implies
\bgl
\label{mgpd-ineq211}
  \norm{\theta^x - (\rho * \theta)^x}
  \leq \int_{\cG^x} \norm{\theta^{\bft(\zeta)} - \zeta \theta^{\bfs (\zeta)}} d \rho^x(\zeta).
\egl
Therefore we have
\bgln
\label{mgpd-ineq21}
  && \int_{\cGu} \rukl{ \int_{\cG^x} \norm{\theta^x - (\rho * \theta)^x} 1_{A_m}(\gamma) d \lambda^x(\gamma) } f(x) d \mu(x) \\
  &\leq& C_m \int_{\cGu} \norm{\theta^x - (\rho * \theta)^x} f(x) d \mu(x) \nonumber \\
  &\overset{\eqref{mgpd-ineq211}}{\leq}& C_m \int_{\cGu} \rukl{ \int_{\cG^x} \norm{\theta^{\bft(\zeta)} - \zeta \theta^{\bfs (\zeta)}} d \rho^x(\zeta) } f(x) d \mu(x) \nonumber \\
  &=& C_m \int_{\cGu} \rukl{ \int_{\cG^x} \norm{\theta^{\bft(\zeta)} - \zeta \theta^{\bfs (\zeta)}} u^x(\zeta) f(x) d \lambda^x(\zeta)} d \mu(x) \nonumber \\
  &=& C_m \int_{\cG} \norm{\theta^{\bft(\zeta)} - \zeta \theta^{\bfs (\zeta)}} \varphi_{f,\rho}(\zeta) d (\mu\star\lambda)(\zeta) \overset{\eqref{tm2}}{<} \ve_m. \nonumber
\egln
For every $\gamma \in \cG$, Lemma~\ref{lemma_conv-meas1}~b) implies that
\bgl
\label{mgpd-ineq221}
  \norm{\theta^{\bft(\gamma)} - \gamma (\rho * \theta)^{\bfs(\gamma)}}
  = \norm{\theta^{\bft(\gamma)} - (\gamma \rho^{\bfs(\gamma)}) * \theta}
  \leq \int_{\cG^{\bft(\gamma)}} \norm{\theta^{\bft(\gamma)} - \eta \theta^{\bfs(\eta)}} d (\gamma \rho^{\bfs(\gamma)})(\eta).
\egl
Using $v_{\gamma}(\eta) \defeq \rukl{d (\gamma \rho^{\bfs(\gamma)}) / d \lambda^{\bft(\gamma)}} (\eta)$
and $
  \phi_{f,\rho}(\eta) = \int_{\cG} v_{\gamma}(\eta) 1_{A_m}(\gamma) f(\bft(\eta)) d \lambda^{\bft(\eta)}(\gamma)
$,
a computation analogous to (\ref{mgpd-ineq21})
gives
\bgln
\label{mgpd-ineq22}
  && \int_{\cGu} \rukl{ \int_{\cG^x} \norm{\theta^{\bft(\gamma)} - \gamma (\rho * \theta)^{\bfs(\gamma)}} 1_{A_m}(\gamma) d \lambda^x(\gamma) } f(x) d \mu(x) \\
  &\overset{\eqref{mgpd-ineq221}}{\leq}&
  \int_{\cGu} \rukl{ \int_{\cG^x} \rukl{ \int_{\cG^x} \norm{\theta^{\bft(\gamma)} - \eta \theta^{\bfs(\eta)}} d (\gamma \rho^{\bfs(\gamma)})(\eta) } 1_{A_m}(\gamma) d \lambda^x(\gamma) } f(x) d \mu(x)
   \nonumber \\
  &=& \int_{\cG} \norm{\theta^{\bft(\eta)} - \eta \theta^{\bfs(\eta)}} \phi_{f,\rho}(\eta) d (\mu\star\lambda)(\eta) \overset{\eqref{tm3}}{<} \ve_m. \nonumber
\egln
Combining \eqref{mgpd-ineq21} and \eqref{mgpd-ineq22}, we get
  $$\int_{\cGu} \rukl{ \int_{\cG^x} \norm{(\rho * \theta)^{\bft(\gamma)} - \gamma (\rho * \theta)^{\bfs(\gamma)}} 1_{A_m}(\gamma) d \lambda^x(\gamma) } f(x) d \mu(x) < 2 \ve_m$$
and also for all $m'\leq m$,
\begin{equation}
\label{mgpd-ineq2}
 \int_{\cGu} \rukl{ \int_{\cG^x} \norm{(\rho * \theta)^{\bft(\gamma)} - \gamma (\rho * \theta)^{\bfs(\gamma)}} 1_{A_{m'}}(\gamma) d \lambda^x(\gamma) } f(x) d \mu(x) < 2 \ve_m.
\end{equation}
With the help of Lemma~\ref{lemma_conv-meas1}~a), we deduce from \eqref{mgpd-ineq1} and \eqref{mgpd-ineq2} that
\bgloz
  \int_{\cGu} \rukl{ \int_{\cG^x} \norm{(\theta * \tau)^{\bft(\gamma)} - \gamma (\theta * \tau)^{\bfs(\gamma)}} 1_{A_{m'}}(\gamma) d \lambda^x(\gamma) } f(x) d \mu(x) < \ve_m
\egloz
and
\bgloz
  \int_{\cGu} \rukl{ \int_{\cG^x} \norm{(\rho * \theta * \tau)^{\bft(\gamma)} - \gamma (\rho * \theta * \tau)^{\bfs(\gamma)}} 1_{A_{m'}}(\gamma) d \lambda^x(\gamma) } f(x) d \mu(x) < 2 \ve_m.
\egloz
As every summand in $\pi_2$ is of the form $\theta * \tau$ or $\rho * \theta * \tau$, we obtain
\bgln
\label{mgpd-pi2}
  && \int_{\cGu} \rukl{ \int_{\cG^x} \norm{(\pi_2)_{\bft(\gamma)} - \gamma  (\pi_2)_{\bfs(\gamma)}} 1_{A_{m'}}(\gamma) d \lambda^x(\gamma) } f(x) d \mu(x) \\
  &<& \sum_{\substack{\bk \in \Nz^n \\ \max(\bk) \geq m}}
  t_{k_1} \cdots t_{k_n} \cdot 2 \ve_m
  \leq \rukl{\sum_{i=1}^{\infty} t_i}^n \cdot 2 \ve_m = 2 \ve_m. \nonumber
\egln
Finally, combining \eqref{mgpd-pi1} and \eqref{mgpd-pi2}, using Lemma~\ref{lemma_conv-meas1}~a), we obtain, for
$m'\leq m$,
\bgloz
  \int_{\cGu} \rukl{ \int_{\cG^x} \norm{\pi^{i}_{\bft(\gamma)} - \gamma  \pi^{i}_{\bfs(\gamma)}} 1_{A_{m'}}(\gamma) d \lambda^x(\gamma) } f(x) d \mu(x)
  < 4 \ve_m \ {\rm for \ all} \ i \geq n=n_m.
\egloz
Since $\lim_{m\rightarrow \infty} \ve_m =0$, we have, for every $m' \in \Nz$ and $f \in \cF_{m'-1}$,
\bgl
\label{mgpd-->0}
  \lim_{i \to \infty} \int_{\cGu} \rukl{ \int_{\cG^x} \norm{\pi^{i}_{\bft(\gamma)} - \gamma  \pi^{i}_{\bfs(\gamma)}} 1_{A_{m'}}(\gamma) d \lambda^x(\gamma) } f(x) d \mu(x) = 0
\egl
which gives
$$ \lim_{i \to \infty} \int_{\cGu} \rukl{ \int_{\cG^x} \norm{\pi^{i}_{\bft(\gamma)} - \gamma  \pi^{i}_{\bfs(\gamma)}}  d \lambda^x(\gamma) } f(x) d \mu(x) = 0$$
for all $f \in \bigcup_i \cF_i$, and hence for all $f \in L^1(\cGu,\mu)$. It follows that for $\mu$-a.e. $x \in \cGu$ and $\lambda^x$-a.e. $\gamma \in \cG^x$, we have $\lim_{i \to \infty} \norm{\pi^{i}_{\bft(\gamma)} - \gamma \pi^{i}_{\bfs(\gamma)}}= 0$. Here we have used the fact that the sequence $\rukl{\norm{\pi^{i}_{\bft(\gamma)} - \gamma \pi^{i}_{\bfs(\gamma)}}}_i$ is decreasing by Lemma~\ref{lemma_conv-meas1}~a).
\eproof

\blemma
\label{mgpd-lem-am-->L}
Let $(\cG,\lambda,\mu)$ be a measure semigroupoid. If there exists a $\lambda$-adapted Borel system of probability measures $\pi = (\pi_x)_{x \in \cGu}$  such that $\lim_{i \to \infty} \norm{\pi^{i}_{\bft(\gamma)} - \gamma \pi^{i}_{\bfs(\gamma)}} = 0$
for $\lambda^x$-a.e. $\gamma \in \cG^x$ and $\mu$-a.e. $x \in \cGu$,
then we have  $H_{\pi,x}(\cG) = \Cz \mathbf{1}$ for $\mu$-a.e. $x \in \cGu$.
\elemma
\bproof
Take $x \in \cGu$ such that $\lim_{i \to \infty} \norm{\pi^{i}_{\bft(\gamma)} - \gamma \pi^{i}_{\bfs(\gamma)}} = 0$ for $\lambda^x$-a.e. $\gamma \in \cG^x$.  Let $f \in L^{\infty}(\cG^x,\lambda^x)$ satisfy $P_x(f) = f$. Then $f = P_x^i(f)$ and, for $\lambda^x$-a.e. $\gamma \in \cG^x$, we have $f(\gamma) = \int_{\cG^x} f(\gamma \zeta) d \pi^{i}_{\bfs(\gamma)} (\zeta)$. Therefore
\bgloz
  \abs{ f(\gamma)-\int_{\cG^x} f(\zeta) d \pi^{i}_x (\zeta)} = \abs{ \spkl{  \gamma \pi^{i}_{\bfs(\gamma)}- \pi^{i}_{\bft(\gamma)}, \,f } } \leq \norm{f}_{\infty} \cdot \norm{\gamma \pi^{i}_{\bfs(\gamma)}- \pi^{i}_{\bft(\gamma)}} \underset{i \to \infty}{\lori} 0
\egloz
 for $\lambda^x$-a.e. $\gamma \in \cG^x$. Hence  $f$ is constant $\lambda^x$-a.e.
\eproof
\rm
From Proposition~\ref{mgpd-prop-am-->L} and Lemma~\ref{mgpd-lem-am-->L}, we have established the following theorem.

\btheo
\label{R-->L_m}
Let $(\cG,\lambda,\mu)$ be a measure semigroupoid. If $(\cG,\lambda,\mu)$ is Reiter, then it is Liouville.
\etheo

We now discuss topological semigroupoids. In addition to the previous approach to the measure semigroupoid case, we need to make use of the uniform and tight Reiter's condition to deduce the existence of one single continuous system $\pi = (\pi_x)_{x \in \cGu}$ in the following key proposition.
\bprop
\label{tgpd-prop-am-->L}
Let $(\cG,\lambda)$ be a locally compact semigroupoid with quasi-Haar system. If $(\cG,\lambda)$ is uniform and tight Reiter, then there exists a $\lambda$-adapted continuous system of probability measures $\pi = (\pi_x)_{x \in \cGu}$ such that
\bgloz
  \lim_{i \to \infty} \norm{\pi^i_{\bft(\gamma)} - \gamma \pi^i_{\bfs(\gamma)}} = 0 \ {\rm for \ all} \ \gamma \in \cG.
\egloz
\eprop
\bproof
Since $\cG$ is $\sigma$-compact (see \S~\ref{sgpd}), there exists a sequence of compact subsets $(C_i)_i$ of $\cG$ such that $C_0 \subseteq C_1 \subseteq C_2 \subseteq \dotso$ and $\cG = \bigcup_{i=0}^{\infty} C_i$. Choose sequences $(t_i)_i$ and $(\ve_i)_i$ in $(0,1)$ such that $\sum_{i=1}^{\infty} t_i = 1$, $\ve_1 > \ve_2 > \ve_3 > \dotso$ and $\lim_{i \to \infty} \ve_i = 0$. Furthermore, choose a sequence $(n_i)_i$ of natural numbers such that $n_1 < n_2 < \dotso$ and $(t_1 + \dotsb + t_{i-1})^{n_i} < \ve_i$.

Let us now inductively choose $\lambda$-adapted continuous systems of probability measures $\theta_i = (\theta_i^x)_{x \in \cGu}$. Let $\theta_1$ be such a continuous system of probability measures with $\norm{\theta_1^{\bft(\gamma)} - \gamma \theta_1^{\bfs(\gamma)}} < \ve_1$ for all $\gamma \in C_0$. Now assume that $\theta_1, \dotsc, \theta_{m-1}$ have been chosen. Let
$$
\Theta_{m-1} \defeq \menge{\theta_{k_1} * \dotsb * \theta_{k_j}}{1 \leq j \leq n_m - 1, \, k_j \in \gekl{1, \dotsc, m-1}}.
$$
Lemma~\ref{lemma_conv-meas2}~c) implies that every $\rho \in \Theta_{m-1}$ is $\lambda$-adapted. Moreover, for $K \subseteq \cGu$ compact, set $\suppc_K(\Theta_{m-1}) \defeq \bigcup_{\rho \in \Theta_{m-1}} \suppc_K(\rho)$. Then, by Lemma~\ref{lemma_conv-meas3}~b), $\suppc_K(\Theta_{m-1})$ is compact for every compact subset $K \subseteq \cGu$. Now choose $\theta_m$ such that
\bgl
\label{tgpd-tm}
 \norm{\theta_m^{\bft(\gamma)} - \gamma \theta_m^{\bfs(\gamma)}} < \ve_m \ {\rm for \ all} \ \gamma \in \suppc_{\bft(C_{m-1})}(\Theta_{m-1}) \cup (C_{m-1} \cdot \suppc_{\bfs(C_{m-1})}(\Theta_{m-1})) \cup C_{m-1}.
\egl
Such $\theta_m$ exist since $\cG$ is uniform and tight Reiter.

Now set $\pi_x \defeq \sum_{i=1}^{\infty} t_i \theta_i^x$ for all $x \in \cGu$. $\pi = (\pi_x)_{x \in \cGu}$ is again a $\lambda$-adapted continuous system of probability measures. Take $m \in \Nz$ and $\gamma \in C_{m-1}$. Set $n \defeq n_m$. Write $\pi^{n}_x = (\pi_1)_x + (\pi_2)_x$ where $(\pi_1)_x$ and $(\pi_2)_x$ are defined as in \eqref{pi1x} and \eqref{pi2x}. Then $\norm{(\pi_1)_x} \leq (t_1 + \dotsb + t_{m-1})^{n_m} < \ve_m$ for all $x \in \cGu$. Therefore, $\norm{\gamma (\pi_1)_{\bfs(\gamma)}} < \ve_m$. Thus we obtain
\bgl
\label{tgpd-pi1}
  \norm{(\pi_1)_{\bft(\gamma)} - \gamma (\pi_1)_{\bfs(\gamma)}} < 2 \ve_m.
\egl
For $\bk \in \Nz^n$ with $\max(\bk) \geq m$, $\theta_{k_1} * \dotsb * \theta_{k_n}$ is of the form $\theta * \tau$ for some $\theta = \theta_k$ with $k \geq m$, or it is of the form $\rho * \theta * \tau$ for some $\rho \in \Theta_{m-1}$ and $\theta = \theta_k$ with $k \geq m$.
By choice of $\theta_i$, we have
\bgl
\label{tgpd-ineq1}
  \norm{\theta^{\bft(\gamma)} - \gamma \theta^{\bfs(\gamma)}} \overset{\eqref{tgpd-tm}}{<} \ve_m.
\egl
Moreover, it follows that from Lemma~\ref{lemma_conv-meas1}~b) that
\bgl
\label{tgpd-ineq21}
  \norm{\theta^{\bft(\gamma)} - (\rho * \theta)^{\bft(\gamma)}} \leq \int_{\supp(\rho^{\bft(\gamma)})} \norm{\theta^{\bft(\zeta)} - \zeta \theta^{\bfs (\zeta)}} d \rho^{\bft(\gamma)}(\zeta) \overset{\eqref{tgpd-tm}}{<} \ve_m.
\egl
Similarly, Lemma~\ref{lemma_conv-meas1}~b) implies that
\bgln
  \norm{\theta^{\bft(\gamma)} - \gamma (\rho * \theta)^{\bfs(\gamma)}}
  &=& \norm{\theta^{\bft(\gamma)} - (\gamma \rho^{\bfs(\gamma)}) * \theta}
  \leq \int_{\cG} \norm{\theta^{\bft(\gamma)} - \zeta \theta^{\bfs (\zeta)}} d (\gamma \rho^{\bfs(\gamma)})(\zeta) \nonumber \\
\label{tgpd-ineq22}
  &=& \int_{\supp(\rho^{\bfs(\gamma)})} \norm{\theta^{\bft(\gamma \eta)} - \gamma \eta \theta^{\bfs (\gamma \eta)}} d \rho^{\bfs(\gamma)}(\eta) \overset{\eqref{tgpd-tm}}{<} \ve_m.
\egln
Combining \eqref{tgpd-ineq21} and \eqref{tgpd-ineq22}, we get
\bgl
\label{tgpd-ineq2}
  \norm{(\rho * \theta)^{\bft(\gamma)} - \gamma (\rho * \theta)^{\bfs(\gamma)}} < 2 \ve_m.
\egl
Therefore, because of \eqref{tgpd-ineq1} and \eqref{tgpd-ineq2}, Lemma~\ref{lemma_conv-meas1}~a) implies $\norm{(\theta * \tau)^{\bft(\gamma)} - \gamma (\theta * \tau)^{\bfs(\gamma)}} < \ve_m$ and $\norm{(\rho * \theta * \tau)^{\bft(\gamma)} - \gamma (\rho * \theta * \tau)^{\bfs(\gamma)}} < 2 \ve_m$. As every summand in $\pi_2$ is of the form $\theta * \tau$ or $\rho * \theta * \tau$, we obtain
\bgl
\label{tgpd-pi2}
  \norm{(\pi_2)_{\bft(\gamma)} - \gamma  (\pi_2)_{\bfs(\gamma)}}
  < \sum_{\substack{\bk \in \Nz^n \\ \max(\bk) \geq m}}
  t_{k_1} \cdots t_{k_n} \cdot 2 \ve_m
  \leq \rukl{\sum_{i=1}^{\infty} t_i}^n \cdot 2 \ve_m = 2 \ve_m.
\egl
Finally, combining \eqref{tgpd-pi1} and \eqref{tgpd-pi2}, we get using Lemma~\ref{lemma_conv-meas1}~a)
\bgloz
  \norm{\pi^i_{\bft(\gamma)} - \gamma  \pi^i_{\bfs(\gamma)}} < 4 \ve_m \ {\rm for \ all} \ i \geq n.
\egloz
On the whole, we deduce that for every $\gamma \in \cG$,
\bgloz
  \lim_{i \to \infty} \norm{\pi^i_{\bft(\gamma)} - \gamma  \pi^i_{\bfs(\gamma)}} = 0.
\egloz
\eproof

\blemma
\label{tgpd-lem-am-->L}
Let $\cG$ be a topological semigroupoid with a quasi-Haar system $\lambda$. Suppose that there exists a $\lambda$-adapted continuous system of probability measures $\pi = (\pi_x)_{x \in \cGu}$  such that $\lim_{i \to \infty} \norm{\pi^i_{\bft(\gamma)} - \gamma \pi^i_{\bfs(\gamma)}} = 0$ for all $\gamma \in \cG^x$.
Then we have  $H_{\pi,x}(\cG) = \Cz \mathbf{1}$ for all $x \in \cGu$.
\elemma
\bproof
Take $x \in \cGu$ and $f \in L^{\infty}(\cG^x,\lambda^x)$ with $P_x(f) = f$. Then $f = P_x^i(f)$ and $f(x) = \int_{\cG^x} f(\zeta) d \pi^i_x (\zeta)$ and also, $f(\gamma) = \int_{\cG^x} f(\gamma \zeta) d \pi^i_{\bfs(\gamma)} (\zeta)$ for all $\gamma \in \cG^x$.  Therefore $\abs{f(x) - f(\gamma)} = \abs{ \spkl{ \pi^i_{\bft(\gamma)} - \gamma \pi^i_{\bfs(\gamma)},f } } \leq \norm{f}_{\infty} \cdot \norm{ \pi^i_{\bft(\gamma)} - \gamma \pi^i_{\bfs(\gamma)} } \underset{i \to \infty}{\lori} 0$. This means that for all $\gamma \in \cG^x$, we have $f(\gamma) = f(x)$ and hence $f$ is constant. This proves $H_{\pi,x}(\cG) = \Cz \mathbf{1}$.
\eproof

Finally, Proposition~\ref{tgpd-prop-am-->L} and Lemma~\ref{tgpd-lem-am-->L} together yield the following
result.
\btheo
\label{R-->L_t}
Let $(\cG,\lambda)$ be a locally compact semigroupoid with quasi-Haar system. If $(\cG,\lambda)$ is  uniform and tight Reiter, then it is Liouville.
\etheo

\section{Special cases}
\label{special}

In this section, we discuss three important special cases, namely groupoids, transformation semigroups and semigroups. We begin with groupoids.

\subsection{Groupoids}

An important consequence of our results is the equivalence  of amenability and the Liouville property for groupoids, in both measurable and topological settings. In particular, we settle a conjecture of Kaimanovich \cite[Conjecture~4.6]{K2} by showing that  amenable groupoids  admit the Liouville property. To be precise, combining Theorem~\ref{L-->R_m} and Theorem~\ref{R-->L_m}, we obtain the following result for measured groupoids.

\btheo
\label{mgpd_LR}
Let $(\cG,\lambda,\mu)$ be a measured groupoid, where $\lambda$ is a left Haar system. Then $(\cG,\lambda,\mu)$ is Liouville if and only if it is Reiter.
\etheo
Since amenability and Reiter's condition are equivalent for groupoids, this theorem establishes the equivalence
of the Liouville property and amenability for measured groupoids. This result has also been proved by B\"{u}hler and Kaimanovich in an unpublished note. However, a similar result for topological groupoids is more subtle, which requires a deeper analysis of
Reiter's condition, given below.

\blemma
\label{gpd_RRR}
Let $(\cG,\lambda)$ be a locally compact groupoid, where $\lambda$ is a left Haar system. The following conditions are equivalent:
\begin{itemize}
  \item[\rm (i)] $(\cG,\lambda)$ is Reiter,
  \item[\rm (ii)] $(\cG,\lambda)$ is uniform Reiter,
  \item[\rm (iii)] $(\cG,\lambda)$ is tight Reiter.
\end{itemize}
\elemma
\bproof
(i) $\Rarr$ (ii) follows from \cite[Theorem~2.14]{Rnew}. (iii) $\Rarr$ (i) is obvious. It remains to prove (ii) $\Rarr$ (iii).

We show that for every $\ve > 0$ and $C \subseteq \cG$ compact, there exists a $\lambda$-adapted continuous system of probability measures $\theta = (\theta^x)_{x \in \cGu}$ on $\cG$ such that $ \norm{\theta^{\bft(\gamma)} - \gamma \theta^{\bfs(\gamma)}} < \ve \ {\rm for \ all} \ \gamma \in C$, and in addition, $\suppc_K(\theta) \ {\rm is \ compact \ for \ every \ compact \ subset} \ K \subseteq \cG$.

Given $\ve > 0$ and $C \subseteq \cG$ compact, since $\cG^{(0)}$ is locally compact and $\sigma$-compact, there exist $K_n \subseteq \cG^{(0)}$ compact with $\mathbf{s}(C) \cup \mathbf{t}(C) \subseteq K_1$, $K_n \subseteq \accentset{\circ}{K_{n+1}}$ for all $n \in \Nz$ and $\cG^{(0)} = \bigcup_{n=1}^{\infty} K_n$.

 Since $(\cG,\lambda)$ is Reiter,  $\cG$ is amenable and hence there exists a {\it topological approximate invariant density} $(g_j)_{j=1}^\infty$ in $C_c(\cG)$, as defined in \cite[Definition~2.7]{Rnew}). Normalizing $g_j \cdot \lambda^x$ for some $j$ and for each $x \in K_2$, we obtain
a $\lambda$-adapted continuous system of measures $\vartheta_2 = (\vartheta_2^x)_{x \in \cGu}$ such that
\bgl
\label{gpd_t2-ineq}
  \norm{\vartheta_2^{\bft(\gamma)} - \gamma \vartheta_2^{\bfs(\gamma)}} < \ve \ {\rm for \ all} \ \gamma \in C
\egl
with the additional properties that $\suppc_{K_2}(\vartheta_2)$ is compact and $\vartheta_2^x$ is a probability measure for all $x \in K_2$. Likewise, for all $n \geq 3$, one can find  $\lambda$-adapted continuous systems of measures $\vartheta_n = (\vartheta_n^x)_{x \in \cGu}$ such that $\suppc_{K_n}(\vartheta_n)$ is compact and $\vartheta_n^x$ is a probability measure for all $x \in K_n$.

Now define $U_2 \defeq \accentset{\circ}{K_2}$ and $U_n \defeq \accentset{\circ}{K_n} \setminus K_{n-2}$ for all $n \geq 3$. By construction, $\cG^{(0)} = \bigcup_{n=2}^{\infty} U_n$. Since $\cG^{(0)}$ is locally compact and $\sigma$-compact, $\cG^{(0)}$ is paracompact and one can find a partition of unity $\{h_n\}_{n\geq 2}$ subordinate to $\{U_n\}_{n\geq 2}$. Define $\theta_n^x \defeq h_n(x) \vartheta_2^x$ for $x \in U_n$ and $\theta_n^x \defeq 0$ for $x \notin U_n$. By construction, these $\theta_n$ are $\lambda$-adapted continuous systems of measures such that $\supp(\theta_n) \defeq \suppc_{\cG^{(0)}}(\theta_n) \subseteq \suppc_{K_n}(\vartheta_n)$. In particular, $\supp(\theta_n)$ is compact.

Set $\theta^x \defeq \sum_{n=2}^{\infty} \theta_n^x$ which is a finite sum since the cover $\gekl{U_n}$ is locally finite. As each $\theta_n$ is $\lambda$-adapted, so is the continuous system $\theta= (\theta^x)_{x \in \cG^{(0)}}$ where $\norm{\theta^x} = \sum_n \norm{\theta_n^x}=\sum_n h_n(x) = 1$.

We claim that $\theta$ has the desired properties.
Indeed, given $\gamma \in C$, both $\bft(\gamma)$ and $\bfs(\gamma)$ lie in $K_1$, hence $h_n(\bft(\gamma)) = h_n(\bfs(\gamma)) = 0$ for all $n \geq 3$, and so $h_2(\bft(\gamma)) = h_2(\bfs(\gamma)) = 1$. Thus $\theta^{\bft(\gamma)} = \theta_2^{\bft(\gamma)} = \vartheta_2^{\bft(\gamma)}$, and also $\theta^{\bfs(\gamma)} = \theta_2^{\bfs(\gamma)} = \vartheta_2^{\bfs(\gamma)}$. It follows that
\bgloz
  \norm{\theta^{\bft(\gamma)} - \gamma \theta^{\bfs(\gamma)}} = \norm{\vartheta_2^{\bft(\gamma)} - \gamma \vartheta_2^{\bfs(\gamma)}} \overset{\eqref{gpd_t2-ineq}}{<} \ve.
\egloz
Finally, given $K \subseteq \cG^{(0)}$ compact, there exists $N \in \Nz$ with $K \subseteq \bigcup_{n=2}^N U_n$, so that $h_n \vert_K \equiv 0$ for all $n \geq N+2$. Hence $\suppc_K(\theta) \subseteq \bigcup_{n=2}^{N+1} \supp(\theta_n)$ which is compact.
\eproof

Combining Theorem~\ref{L-->R_t} and Theorem~\ref{R-->L_t} with the previous lemma, we obtain the equivalence of the
Liouville property and Reiter's condition in the topological setting. This proves the topological analogue of \cite[Conjecture~4.6]{K2}.
\btheo
\label{tgpd_LR}
Let $(\cG,\lambda)$ be a locally compact groupoid, where $\lambda$ is a left Haar system. Then $(\cG,\lambda)$ is Liouville if and only if it is Reiter.
\etheo
\bremark
By convention, all our locally compact groupoids are locally compact and second countable. However, the same proofs as above show that Lemma~\ref{gpd_RRR} and Theorem~\ref{tgpd_LR} also hold for topological groupoids which are locally compact and $\sigma$-compact.
\eremark

\subsection{Transformation semigroups}

We now show the equivalence of the Liouville property and Reiter's condition for transformation semigroups. First,
combining Theorem~\ref{L-->R_m} and Theorem~\ref{R-->L_m}, one obtains the following result in the measure setting.

\btheo
Let $(X,\mu)$ be an analytic Borel space. Let $S$ be a discrete semigroup acting on $X$ by Borel maps. Let $(X \rtimes S,\lambda,\mu)$ be the corresponding measure semigroupoid. Then $(X \rtimes S,\lambda,\mu)$ is Liouville if and only if it is Reiter.
\etheo

For the topological setting, we prove a lemma first.
\blemma
Let $X$ be a locally compact and second countable Hausdorff space, and $S$ a countable discrete semigroup acting on $X$ by continuous maps. Let $(X \rtimes S,\lambda)$ be the corresponding locally compact semigroupoid with quasi-Haar system. The following are equivalent:
\begin{itemize}
  \item[\rm (i)] $(X \rtimes S,\lambda)$ is uniform Reiter,
  \item[\rm (ii)] $(X \rtimes S,\lambda)$ is  uniform and tight Reiter,
  \item[\rm (iii)] For each $\ve > 0$, finite set $F \subseteq S$  and compact set $C \subseteq X$,  there exists a family $\theta = (\theta^x)_{x \in X}$ of probability measures on $S$ satisfying the following conditions:
  \begin{itemize}
       \item[\rm (1)] the map $ x \in X \ma \theta^x\in \ell^1(S)$ is continuous;
       \item[\rm (2)] $\norm{\theta^x - s \theta^{x.s}} < \ve$ for all $s \in F$ and $x \in C$;
       \item[\rm (3)]  $\supp(\theta) \defeq \bigcup_{x \in X} \supp(\theta^x)$ is finite.
  \end{itemize}
\end{itemize}
\elemma
\bproof
Clearly, we have (iii) $\Rarr$ (ii) $\Rarr$ (i). It remains to prove (i) $\Rarr$ (iii). Let $\ve > 0$ with a finite set $F \subseteq S$  and compact set $C \subseteq X$   given in (iii). Since $(X \rtimes S,\lambda)$ is uniform Reiter, there is a family of probability measures $\vartheta = (\vartheta^x)_{x \in X}$ on $S$ such that the map $ x\in X \ma \vartheta^x \in\ell^1(S) $ is continuous and $\norm{\vartheta^x - \delta_s * \vartheta^{x.s}} < \tfrac{\ve}{2}$ for all $s \in F$ and $x \in C$. Since $X$ is locally compact and $\sigma$-compact, there exist compact subsets $K_n$ of $X$ with $C \cup C.F \subseteq K_1$, $K_n \subseteq \accentset{\circ}{K_{n+1}}$ for all $n \in \Nz$ and $X = \bigcup_{n=1}^{\infty} K_n$.

Let us now define probability measures $\vartheta_2^x$ on $S$ for all $x \in K_2$ such that the map $x\in K_2 \ma \vartheta_2^x \in \ell^1(S) $ is continuous. For $1 > \ve' > 0$ and $x \in K_2$, there exists a finite set $E_x \subseteq S$  such that $\norm{ \sum_{s \in E_x} \vartheta^x(s) \delta_s - \vartheta^x } < \ve'$. As
$y\in X \ma \vartheta^y\in \ell^1(S) $ is continuous, the map $y\in X \ma \sum_{s \in E_x} \vartheta^y(s) \delta_s - \vartheta^y\in \ell^1(S)$ is continuous as well. Hence there exists an open neighbourhood $U_x$ of $x$ such that $\norm{ \sum_{s \in F_x} \vartheta^y(s) \delta_s - \vartheta^y } < \ve'$ for all $y \in U_x$. Since $K_2$ is compact, there exist finitely many $x_1, \dotsc, x_N$ in $K_2$ such that $K_2 \subseteq \bigcup_{i=1}^N U_{x_i}$. Define $E \defeq \bigcup_{i=1}^N E_{x_i}$ and for $x \in K_2$, set $\tilde{\vartheta}_2^x \defeq \sum_{s \in E} \vartheta^x(s)$. Obviously, the map  $x\in K_2 \ma \tilde{\vartheta}_2^x\in \ell^1(S)$ is continuous. For all $x \in K_2$, we have $\norm{\tilde{\vartheta}_2^x} \geq \norm{\vartheta^x} - \norm{\tilde{\vartheta}_2^x - \vartheta^x} > 1 - \ve'$. Thus we may form $\vartheta_2^x \defeq \norm{\tilde{\vartheta}_2^x}^{-1} \cdot \tilde{\vartheta}_2^x$ for all $x \in K_2$. By construction, $\vartheta_2^x$ are probability measures on $S$ such that the map $x\in K_2  \ma \vartheta_2^x\in \ell^1(S)$ is continuous. Moreover, for all $x \in K_2$, we have
\bglnoz
  && \norm{\vartheta_2^x - \vartheta^x}
  \leq \norm{\vartheta_2^x - \tilde{\vartheta}_2^x} + \norm{\tilde{\vartheta}_2^x - \vartheta^x}
  = \rukl{\norm{\tilde{\vartheta}_2^x}^{-1} - 1} \norm{\tilde{\vartheta}_2^x} + \norm{\tilde{\vartheta}_2^x - \vartheta^x} \\
  &<& (1 - \ve')^{-1} - 1 + \ve' < \tfrac{\ve}{4}
\eglnoz
for sufficiently small $\ve'$. Hence it follows that for all $s \in E$ and $x \in C$, we have
\bgln
\label{XS_t2-ineq}
  && \norm{\vartheta_2^x - \delta_s * \vartheta_2^{x.s}}
  \leq \norm{\vartheta_2^x - \vartheta^x} + \norm{\vartheta^x - \delta_s * \vartheta^{x.s}} + \norm{\delta_s * \vartheta^{x.s} - \delta_s * \vartheta_2^{x.s}}
  \\
  &<& \tfrac{\ve}{4} + \tfrac{\ve}{2} + \tfrac{\ve}{4} = \ve. \nonumber
\egln
Now define $U_2 \defeq \accentset{\circ}{K_2}$ and $U_n \defeq \accentset{\circ}{K_n} \setminus K_{n-2}$ for all $n \geq 3$.  By construction, we have $\accentset{\circ}{K_n} \setminus \accentset{\circ}{K_{n-1}} \subseteq U_n$ for all $n \geq 3$, and $X \subseteq \bigcup_{n=2}^{\infty} K_{n-1} \subseteq \bigcup_{n=2}^{\infty} \accentset{\circ}{K_n} \subseteq \bigcup_{n=2}^{\infty} U_n$. Since $X$ is locally compact and $\sigma$-compact,  we can find a partition of unity $\{h_n\}_{n\geq 2}$ subordinate to $\{U_n\}_{n\geq 2}$. Define $\theta_2^x \defeq h_2(x) \vartheta_2^x$ for $x \in U_2$ and $\theta_2^x \defeq 0$ for $x \notin U_2$. Then the map $x\in X \ma \theta_2^x\in \ell^1(S)$ is continuous. Now fix $t \in S$ and define, for $n \geq 3$, $\theta_n^x \defeq h_n(x) \delta_t$. Set $\theta^x \defeq \sum_{n=2}^{\infty} \theta_n^x$.

 We claim that $\theta$ has the desired properties. For every $x \in X$, $\norm{\theta_n^x} = h_n(x)$ implies $\norm{\theta^x} = \sum_n h_n(x) = 1$ and $\theta^x$ are probability measures. Moreover, given $x \in X$, there exist $N \geq 2$ such that $x$ lies in $U_N$. But by construction, $U_N \subseteq K_N$ implies  $U_N \cap U_n = \emptyset$ for all $n \geq N+2$. Therefore, $\theta^x = \sum_{n=2}^{N+1} \theta_n^x$ for all $x \in U_N$. This shows continuity of the map  $x\in U_N  \ma \theta^x\in \ell^1(S)$ and $x\in X \ma \theta^x\in \ell^1(S)$. Furthermore, it is clear that for all $x \in X$, we have $\supp(\theta^x) \subseteq E \cup \gekl{t}$. Finally, given $s \in F$ and $x \in C$, both $x$ and $x.s$ lie in $K_1$ and hence $h_n(x) = h_n(x.s) = 0$ for all $n \geq 3$, and  $h_2(x) = h_2(x.s) = 1$. Thus $\theta^x = \theta_2^x = \vartheta_2^x$ and $\theta^{x.s} = \theta_2^{x.s} = \vartheta_2^{x.s}$. We conclude that
\bgloz
  \norm{\theta^x - \delta_s * \theta^{x.s}} = \norm{\vartheta_2^x - \delta_s * \vartheta_2^{x.s}} \overset{\eqref{XS_t2-ineq}}{<} \ve.
\egloz
\eproof
We observe from the definition that the transformation semigroupoid $(X \rtimes S,\lambda)$ in the above lemma is Liouville if and only if it is continuous Liouville, and now, using
 Theorem~\ref{L-->R_t} and Theorem~\ref{R-->L_t}, we can conclude with the following result.
 \btheo\label{87}
Let $X$ be a locally compact and second countable Hausdorff space, and $S$ a countable discrete semigroup acting on $X$ by continuous maps. Let $(X \rtimes S,\lambda)$ be the corresponding locally compact semigroupoid with quasi-Haar system. Then $(X \rtimes S,\lambda)$ is Liouville if and only if it is uniform Reiter.
\etheo

\subsection{Locally compact semigroups}
\label{special-sgp}

Let $S$ be a second countable locally compact semigroup equipped with a positive quasi-invariant Borel measure $\lambda$  such that the map $(s,t)\in S \times S \ma \rukl{d (s \lambda) / d \lambda}(t)$ is Borel. As explained in Remark~\ref{ex_haar}, by identifying $S$ with $\gekl{\rm pt} \rtimes S$, we may view $(S,\lambda)$ as a locally compact semigroupoid with a quasi-Haar system. In this case, $(S,\lambda)$ is Reiter, in the sense of Definition~\ref{def-Reiter_t}, if  there exists a sequence $(\theta_n)$ of Borel probability measures on $S$ with $\theta_n \prec \lambda$ for all $n \in \Nz$, and $\lim_{n \to \infty} \norm{\theta_n - s \theta_n} = 0$ for all $s \in S$.

\bremark
\label{RA_meas-sgp}
In the above setting, $(S,\lambda)$ is Reiter if and only if there exists a left invariant mean on $L^{\infty}(S,\lambda)$ since the proof in \cite[\S\,4.2]{Li} carries over. Therefore there is no difference between Reiter's condition and amenability for $(S,\lambda)$. 
\eremark

 To describe the Liouville property for $(S,\lambda)$ according to Definition~\ref{def-L_t}, let $\pi$ be a probability measure  on $S$ with $\pi \prec \lambda$. In this case, the fibrewise Markov operators reduce  to a single one $P: \: L^{\infty}(S,\lambda) \to L^{\infty}(S,\lambda)$ with $P(f)(s) = \int_S f(st) d \pi(t)$, and  $H_{\pi}(S) \defeq \menge{f \in L^{\infty}(S,\lambda)}{P(f) = f}$ is the space of $\pi$-harmonic functions in
 $L^\infty(S,\lambda)$. The locally compact semigroup  $(S,\lambda)$ is Liouville if there exists a probability measure $\pi$ on $S$ with $\pi \prec \lambda$ such that $H_{\pi}(S) = \Cz \mathbf 1$.

\btheo
\label{special_thm_meas-sgp}
In the situation described above, $(S,\lambda)$ is Liouville if and only if it is Reiter.
\etheo
\bproof
The key observation is that, by definition, $(S,\lambda)$ is Liouville if and only if $(\gekl{\rm pt} \rtimes S,\delta \times \lambda,\delta)$ is Liouville as a measure semigroupoid (where $\delta$ is the point mass). By Theorem~\ref{L-->R_m}, we know that $(S,\lambda)$ is Reiter if it is Liouville. For the converse, let $(S,\lambda)$ be Reiter. This means that $(S,\lambda)$ is Reiter as a locally compact semigroupoid with quasi-Haar system. Again by definition, this implies that $(\gekl{\rm pt} \rtimes S,\delta \times \lambda,\delta)$ is Reiter as a measure semigroupoid. Hence Theorem~\ref{R-->L_m} implies that $(\gekl{\rm pt} \rtimes S,\delta \times \lambda,\delta)$ is Liouville as a measure semigroupoid. Hence the locally compact semigroup $(S,\lambda)$ is Liouville.
\eproof

The question remains  which semigroups $S$ admit a quasi-invariant measure $\lambda$ with a Borel
map $(s,t)\in S \times S \ma \rukl{d (s \lambda) / d \lambda}(t)$. This is for instance the case if $S$ is a locally compact subsemigroup of a second countable locally compact group $G$, that is, $S$ is a subsemigroup of $G$ as well as a locally compact subspace. If the Haar measure $\lambda_G$ on $G$ satisfies $\lambda_G(S) \neq 0$, then the restriction $\lambda$ of $\lambda_G$ to $S$ is a measure with the desired properties.

Of course, another class of examples is given by countable discrete semigroups $S$ with identity and counting measure $\lambda$. For these semigroups, we have the following result from Remark~\ref{RA_meas-sgp} and Theorem~\ref{special_thm_meas-sgp}. In the next subsection, we discuss the case of metric semigroups, which need not be locally compact nor support a quasi-invariant measure.
\btheo
\label{610}
Let $S$ be a countable discrete semigroup with identity. The following conditions are equivalent.
\begin{itemize}
\item[\rm (i)] $S$ is Liouville, that is, there exists a probability measure $\pi$ on $S$ such that every bounded $\pi$-harmonic function  is constant;
\item[\rm (ii)] $S$ is Reiter, that is, there exists a sequence $(\theta_n)_n$ of probability measures on $S$ such that $\lim_{n \to \infty} \norm{\theta_n - s \theta_n} = 0$ for all $s \in S$;
\item[\rm (iii)] $S$ is amenable.
\end{itemize}
\etheo

In the last section, we give some examples of discrete semigroups with the Liouville property for all non-degenerate probability measures, i.e., with the property that every bounded $\pi$-harmonic function is constant for every non-degenerate probability measure $\pi$.

\subsection{Metric semigroups}
\label{sgp}

Following the previous results for semigroups equipped with a quasi-invariant measure, it is natural to enquire about semigroups without such a measure.  We present the case of metric semigroups in this subsection, but refer to \cite{CLi} for further discussion of topological semigroups.

Given a probability measure $\pi$ on a metric semigroup $S$, we denote by
$H_\pi(S)$
the closed subspace of $LUC(S)$ consisting of $\pi$-harmonic functions on $S$.
 We say that
$S$ has the {\it Liouville property} if there is a tight probability measure $\pi$ on $S$ such that $H_{\pi}(S) = \mathbb{C}\mathbf{1}$ where $\mathbf 1$ denotes the constant function on $S$ with value $1$.

A metric semigroup $S$ is said to satisfy {\it Reiter's condition} if for every $\ve > 0$ and compact set $K \subseteq S$, there is a probability measure $\theta$ on $S$ with compact support such that $\norm{\theta - \delta_s * \theta} < \ve$ for all $s \in K$.

For a countable discrete semigroup $S$,  the notion of the Liouville property  and Reiter's condition just introduced agrees with the one previously defined.

Given a $\sigma$-compact metric semigroup $S$ satisfying Reiter's condition, a construction analogous to that in the proof of Proposition \ref{tgpd-prop-am-->L} yields a tight probability measure $\pi$ on $S$ such that
\bgl\label{pin}
\lim_{n \rightarrow \infty} \|\delta_s * \pi^n - \pi^n\| =0
\egl
for all $s \in S$. In fact, this is also true for topological semigroups and a detailed proof will be given in \cite{CLi}.
Consequently, we have the following result.

\begin{theorem}
\label{R-L_thm_msgp}
Let $S$ be a  $\sigma$-compact metric semigroup with identity $e$ satisfying Reiter's condition. Then $S$ enjoys the Liouville property.
\end{theorem}
\begin{proof} By the previous observation, there is a tight probability measure $\pi$ on $S$ satisfying (\ref{pin})
for all $s \in S$.
Hence for each $\pi$-harmonic function $f\in LUC(S)$, we have
\begin{eqnarray*}
|f(x)- f(e)|&=|\int_S (f(xy)-f(y))d\pi^n(y)|
=|\int_S(\delta_x * f -f)d\pi^n| \hspace{1.1in}\\
&=|\int_S f d(\delta_x * \pi^n -\pi^n)| \leq\|f\|_{\infty} \|\delta_x * \pi^n - \pi^n\|
 \rightarrow 0\quad {\rm as}\quad n \rightarrow \infty
\end{eqnarray*}
and it follows that $f$ is constant. This completes the proof.
\end{proof}

We conclude this section by showing that a metric semigroup $S$ with the Liouville property, but not necessarily $\sigma$-compact,
must be amenable. This will follow readily from the fact that
 the space $H_\pi(S)$ of bounded $\pi$-harmonic functions on  $S$ is the range of a contractive projection on $LUC(S)$, which commutes with left translations.   To show this, we need the following two lemmas which are straightforward extension of similar results in \cite{CLau} for groups.

 Let $\rho$ denote the topology of pointwise convergence in $LUC(S)$ and let $\tau$ be the topology of uniform convergence on compact sets in $S$.

\begin{lemma}
\label{Kf-cpct}
Let $S$ be a metric semigroup and let $f \in LUC(S)$. Denote by $K_f$ the
$\rho$-closure $\overline{\rm co}\,^{\rho} \{f*\delta_s: s\in S\}$ of the convex hull of $\{f*\delta_s: s\in S\}$. Then the topologies $\rho$ and $\tau$ coincide on $K_f$ which is compact in these topologies.
\end{lemma}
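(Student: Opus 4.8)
The plan is to recognise this as an instance of the Arzelà--Ascoli theorem. The crucial input is Remark~\ref{LUC-equicont}: since $f \in LUC(S)$, the family $\mathcal{G} := \{f*\delta_s : s \in S\}$ of right translates is equicontinuous on $S$, and it is uniformly bounded because $\|f*\delta_s\| \le \|f\|$ for every $s$.

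First I would pass to the convex hull. A finite convex combination $\sum_i c_i (f*\delta_{s_i})$ is again bounded by $\|f\|$, and it is equicontinuous with the very same neighbourhoods that witness equicontinuity of $\mathcal{G}$, since $\big|\sum_i c_i (h_i(b)-h_i(a))\big| \le \sum_i c_i\,|h_i(b)-h_i(a)|$. Hence $C := \operatorname{co}\mathcal{G}$ is a uniformly bounded equicontinuous family. I would also record that $C$ is invariant under right translation: from $(f*\delta_s)*\delta_t = f*\delta_{ts}$ one gets $\big(\sum_i c_i f*\delta_{s_i}\big)*\delta_t = \sum_i c_i f*\delta_{t s_i} \in C$.

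Next I would take the pointwise closure. The $\rho$-closure of an equicontinuous family is again equicontinuous (if $|h(b)-h(a)|\le\varepsilon$ for all $h\in C$ and all $b$ in a neighbourhood of $a$, the same bound survives in any pointwise limit), and it remains bounded by $\|f\|$; in particular every limit function is continuous. Because pointwise convergence is preserved by right translation --- $g_\alpha\to g$ pointwise forces $g_\alpha*\delta_s \to g*\delta_s$ pointwise, these being merely evaluations at shifted points $(g_\alpha*\delta_s)(x)=g_\alpha(xs)$ --- the closure stays right-translation invariant, and equicontinuity of the subfamily $\{g*\delta_s : s\in S\}$ shows $g\in LUC(S)$. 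Thus the pointwise closure formed in $\mathbb{C}^S$ already lands inside $LUC(S)$ and coincides with $K_f$; consequently $K_f$ is itself equicontinuous and bounded by $\|f\|$.

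Finally I would invoke Ascoli's theorem in two parts. On the equicontinuous family $K_f$ the topologies $\rho$ and $\tau$ agree: $\tau$ is always finer (singletons are compact), and the reverse inclusion is the standard $3\varepsilon$-argument, covering a compact $K\subseteq S$ by finitely many equicontinuity neighbourhoods $N_{x_1},\dots,N_{x_n}$ and controlling $\sup_{y\in K}|h(y)-g(y)|$ through the finitely many pointwise differences $|h(x_i)-g(x_i)|$. For compactness, I would view $K_f$ inside $\prod_{s\in S}\overline{B}(0,\|f\|)$, which is compact by Tychonoff; as $K_f$ is $\rho$-closed by construction it is $\rho$-compact, hence $\tau$-compact as well. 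The only point demanding care --- rather than a genuine obstacle --- is checking that the pointwise closure does not escape $LUC(S)$, i.e.\ the right-translation-invariance step giving $K_f\subseteq LUC(S)$; once the equicontinuity of $K_f$ is secured, both conclusions are the classical Arzelà--Ascoli theorem.
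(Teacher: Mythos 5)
Your proof is correct and takes essentially the same approach as the paper: both rest on the equicontinuity of the right translates (Remark~\ref{LUC-equicont}) together with the Arzel\`a--Ascoli theorem, which the paper invokes as a black box (\cite[Theorem~43.15]{w}) and which you instead prove by hand via Tychonoff plus the $3\varepsilon$-argument. Your additional step --- using right-translation invariance, $(f*\delta_s)*\delta_t = f*\delta_{ts}$, to check that the pointwise closure taken in $\mathbb{C}^S$ stays inside $LUC(S)$, so that $K_f$ is genuinely closed in the compact product $\prod_{s\in S}\overline{B}(0,\|f\|)$ --- is correct and in fact makes explicit a detail that the paper's one-line citation leaves implicit.
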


\begin{lemma}\label{in} Let $f\in LUC(S)$ and $\pi\in M(S)$ be a probability measure. Then $f* \pi \in K_f$.
\end{lemma}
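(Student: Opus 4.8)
The plan is to realize $f*\pi$ as the barycenter of the probability measure $\pi$ against the family of right translates $\{f*\delta_y : y \in S\}$, all of which lie in $K_f$, and then to argue that such a barycenter cannot escape the $\rho$-compact convex set $K_f$. The starting point is the pointwise identity
$$(f*\pi)(x) = \int_S f(xy)\, d\pi(y) = \int_S (f*\delta_y)(x)\, d\pi(y) \qquad (x \in S),$$
which says that, evaluated at any point $x$, the function $f*\pi$ is the $\pi$-average of the functions $f*\delta_y$. By Lemma~\ref{Kf-cpct} the set $K_f$ is convex and compact in the topology $\rho$ of pointwise convergence, and $\{f*\delta_s : s \in S\} \subseteq K_f$ by construction. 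Since $\rho$ is the restriction to $LUC(S)$ of the product topology on $\mathbb{C}^S$, the pair $(LUC(S), \rho)$ is a locally convex Hausdorff space in which $K_f$ is a compact convex subset.

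First I would argue by contradiction: suppose $f*\pi \notin K_f$. Because $K_f$ is $\rho$-compact (hence $\rho$-closed) and convex, the Hahn--Banach separation theorem provides a $\rho$-continuous linear functional $L$ on $LUC(S)$ and a constant $c \in \mathbb{R}$ with
$$\operatorname{Re} L(f*\pi) > c \geq \operatorname{Re} L(g) \qquad (g \in K_f).$$
The crucial structural input is the description of the continuous dual of the pointwise topology: a linear functional is $\rho$-continuous precisely when it is dominated by finitely many evaluation seminorms $g \mapsto |g(x_i)|$, and therefore must have the form $L(g) = \sum_{i=1}^n c_i\, g(x_i)$ for finitely many points $x_1, \dots, x_n \in S$ and scalars $c_1, \dots, c_n$.

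With $L$ in this explicit form, the separation inequality collapses. Using the pointwise identity above together with linearity,
$$L(f*\pi) = \sum_{i=1}^n c_i (f*\pi)(x_i) = \int_S \Big( \sum_{i=1}^n c_i\, f(x_i y) \Big) d\pi(y) = \int_S L(f*\delta_y)\, d\pi(y),$$
where the interchange of the finite sum and the integral is immediate since $f$ is bounded and $\pi$ is finite. As $f*\delta_y \in K_f$ gives $\operatorname{Re} L(f*\delta_y) \leq c$ for every $y$, and $\pi$ is a probability measure, I conclude that $\operatorname{Re} L(f*\pi) = \int_S \operatorname{Re} L(f*\delta_y)\, d\pi(y) \leq c$, contradicting the strict inequality. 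Hence $f*\pi \in K_f$.

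I expect the only genuinely delicate point to be the identification of the $\rho$-continuous functionals as finite linear combinations of point evaluations; everything else --- the pointwise integral representation, the interchange of sum and integral, and the use of the normalization $\pi(S)=1$ --- is routine. It is worth noting that this argument requires no separability or tightness hypothesis on $\pi$, relying solely on the compactness of $K_f$ furnished by Lemma~\ref{Kf-cpct}.
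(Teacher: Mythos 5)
Your proof is correct, but it follows a genuinely different route from the paper's. The paper argues on the measure side: it identifies the cone $M^+(S) \subseteq C_b(S)^*$ with the bipolar $B^{00}$ of $B = {\rm co}\{\delta_s : s \in S\}$ (citing Choquet), deduces that the probability measure $\pi$ is the w*-limit of a net of finitely supported convex combinations $\pi_\alpha = \sum_i \lambda_i^\alpha \delta_{s_i^\alpha}$, and then observes that $f*\pi_\alpha = \sum_i \lambda_i^\alpha\, f*\delta_{s_i^\alpha}$ converges pointwise to $f*\pi$, which therefore lies in the $\rho$-closed convex hull $K_f$. You instead run the duality on the function side: assuming $f*\pi \notin K_f$, you separate it from the $\rho$-compact convex set $K_f$ by a $\rho$-continuous functional via Hahn--Banach in the locally convex space $(LUC(S),\rho)$, and then exploit the standard fact that the dual of a topology of pointwise convergence consists exactly of finite linear combinations of point evaluations, so that the barycentric identity $(f*\pi)(x) = \int_S (f*\delta_y)(x)\, d\pi(y)$ yields an immediate contradiction. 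All the steps you flag are sound: the dual characterization is the usual description of the dual of a weak topology, and positivity and normalization of $\pi$ are exactly what is needed in the final estimate. At bottom both arguments rest on Hahn--Banach separation (the bipolar theorem is itself a consequence of it), but yours is more self-contained and elementary --- it avoids the identification $M^+(S) = B^{00}$ and any approximation of $\pi$ by discrete measures, and in particular sidesteps any question of whether finitely supported measures are w*-dense in the probability measures on a general metric space --- whereas the paper's approach is constructive in flavor, exhibiting an explicit approximating net inside ${\rm co}\{f*\delta_s : s \in S\}$.
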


\begin{proposition}\label{p} Let $S$ be a  metric semigroup and let $\pi\in M_t(S)$ be a
probability measure. Then there exists a contractive projection $P: \: LUC(S)\to LUC(S)$ with range equal to $H_{\pi}(S).$ Moreover, $P$ commutes with left translations on $S$.
\end{proposition}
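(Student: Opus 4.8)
The plan is to identify $H_\pi(S)$ with the fixed-point set of the convolution operator $T\colon LUC(S)\to LUC(S)$, $Tf=f*\pi$, and then to manufacture $P$ by an averaging procedure carried out inside the compact convex set $K_f$ furnished by Lemma~\ref{Kf-cpct}. First I would record the elementary facts about $T$: by Remark~\ref{LUC-equicont} it maps $LUC(S)$ into itself and is a contraction, $\|Tf\|\le\|f\|$, since $\pi$ is a probability measure; a bounded function $f$ is $\pi$-harmonic exactly when $Tf=f$, so $H_\pi(S)=\gekl{f:Tf=f}$; and $T$ commutes with every left translation $L_af=\delta_a*f$, because $\int f(axy)\,d\pi(y)$ may be read either way. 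Crucially, for each $f$ the whole forward orbit lies in $K_f$: indeed $T^nf=f*\pi^n$ and $\pi^n$ is again a probability measure, so Lemma~\ref{in} gives $T^nf\in K_f$ for every $n\ge1$.

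Next I would check that $T$ restricts to a $\rho$-continuous affine self-map of $K_f$. That $T(K_f)\subseteq K_f$ follows since right translation $h\mapsto h*\delta_t$ preserves $K_f$ (it sends $f*\delta_s$ to $f*\delta_{st}$), whence $K_g\subseteq K_f$ for every $g\in K_f$ and therefore $g*\pi\in K_g\subseteq K_f$ by Lemma~\ref{in}; the $\rho$-continuity of $T$ on $K_f$ is just dominated convergence, every element of $K_f$ being bounded by $\|f\|$. With this in hand I would fix, once and for all, a shift-invariant Banach limit $\mathrm{LIM}$ on $\ell^\infty(\Nz)$ and set $Pf:=\mathrm{LIM}_n\,T^nf$, interpreted as the barycenter in $K_f$ of the push-forward of $\mathrm{LIM}$ along $n\mapsto T^nf$; equivalently $(Pf)(x)=\mathrm{LIM}_n(T^nf)(x)$ pointwise. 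Because $K_f$ is compact and convex in the locally convex space $(LUC(S),\rho)$, this barycenter exists and $Pf\in K_f\subseteq LUC(S)$, so $P$ is a well-defined \emph{linear} self-map of $LUC(S)$.

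It then remains to verify the four required properties. Contractivity is immediate from $|(Pf)(x)|\le\sup_n\|T^nf\|\le\|f\|$. That $Pf$ is $\pi$-harmonic is where the construction pays off: since $T$ is $\rho$-continuous and affine on $K_f$ it commutes with the barycenter, so $TPf$ is the barycenter of $n\mapsto T^{n+1}f$; comparing pointwise against $Pf$ and using the shift-invariance $\mathrm{LIM}_na_{n+1}=\mathrm{LIM}_na_n$ gives $TPf=Pf$, i.e. $Pf\in H_\pi(S)$. Conversely $P$ fixes $H_\pi(S)$ pointwise, for if $Tf=f$ then $T^nf=f$ and $Pf=f$; hence $P^2=P$ and the range of $P$ is exactly $H_\pi(S)$. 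Finally $P$ commutes with left translations because $T$ does: from $T^nL_a=L_aT^n$ one gets $(PL_af)(x)=\mathrm{LIM}_n(T^nf)(ax)=(L_aPf)(x)$.

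The main obstacle, and the reason this is not a one-line invocation of the mean ergodic theorem, is that the powers $T^nf$ (or their Ces\`aro means) need not converge in the norm or even the weak topology of the Banach space $LUC(S)$, so one cannot simply quote a Banach-space ergodic theorem to produce the limit. What rescues the argument is precisely Lemma~\ref{Kf-cpct}: the orbit is trapped in $K_f$, which is compact in the weak pointwise topology $\rho$, and on $K_f$ this topology coincides with the topology $\tau$ of uniform convergence on compacta. This dual feature does the two things we need at once---it keeps the averaged limit inside $LUC(S)$ rather than in some larger completion, and, via dominated convergence, it makes $T$ continuous in the pointwise topology, so that the shift-invariant mean may be pushed through $T$ to force harmonicity. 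The one point to confirm with care is that $K_f\subseteq LUC(S)$, i.e. that the right translates of each element of $K_f$ stay equicontinuous, which follows from the equicontinuity of $\gekl{f*\delta_s:s\in S}$ noted in Remark~\ref{LUC-equicont}.
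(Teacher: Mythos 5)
Your construction is a genuinely different route from the paper's: instead of obtaining $P$ abstractly as a Markov--Kakutani fixed point of the affine map $\Lambda \mapsto \Lambda(\cdot)*\pi$ acting on the compact convex set $\overline{\rm co}\gekl{L^n : n \geq 1} \subseteq \prod_f K_f$ (where $L(f) = f*\pi$ is your $T$), you build $P$ concretely as an invariant-mean average $(Pf)(x) = \mathrm{LIM}_n (T^n f)(x)$, realized as a barycenter in $K_f$. Both arguments rest on the same two pillars --- Lemma~\ref{in} to trap the orbit in $K_f$, and Lemma~\ref{Kf-cpct} for compactness and the coincidence $\rho = \tau$ --- and your verifications of linearity, contractivity, $P|_{H_\pi(S)} = {\rm id}$, $P^2 = P$, ${\rm range}(P) = H_\pi(S)$, commutation with left translations, $K_f \subseteq LUC(S)$, and $T(K_f) \subseteq K_f$ are all sound.

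There is, however, one genuine gap, and it sits at the crucial step where you push $T$ through the barycenter: you justify the $\rho$-continuity of $T$ on $K_f$ by ``dominated convergence''. The convergence actually in play is along \emph{nets}, not sequences: $Pf$ is a $\rho$-limit of finite convex combinations $b_\alpha = \sum_n m_\alpha(n)\, T^n f$, where $(m_\alpha)_\alpha$ is a net of finitely supported means converging weak* to $\mathrm{LIM}$ in $\ell^\infty(\Nz)^*$; there is no reason this net can be taken to be a sequence, and $(K_f,\rho)$ is not metrizable in general since $S$ is not assumed separable. To conclude $T(Pf) = \lim_\alpha T(b_\alpha)$ you need $T$ to be $\rho$-continuous along nets, and dominated convergence gives nothing there: a uniformly bounded net converging pointwise need not have convergent integrals. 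Equivalently, the identity $(TPf)(x) = \int \mathrm{LIM}_n (T^n f)(xy)\, d\pi(y) = \mathrm{LIM}_n \int (T^n f)(xy)\, d\pi(y)$ is an interchange of a merely finitely additive mean with an integral, which is not automatic. A telltale sign is that your proof never invokes the hypothesis $\pi \in M_t(S)$; tightness is exactly what is needed here, and it is how the paper handles the corresponding continuity of its affine map: for a net $g_\alpha \to g$ in $(K_f,\rho)$, Lemma~\ref{Kf-cpct} upgrades the convergence to uniform convergence on compact subsets of $S$; then, fixing $x$ and choosing a compact $K \subseteq S$ with $\pi(S \setminus K) < \ve$, the set $xK$ is compact and $\abs{\int g_\alpha(xy)\,d\pi(y) - \int g(xy)\,d\pi(y)} \leq \sup_{y \in K}\abs{g_\alpha(xy)-g(xy)} + 2\norm{f}_\infty \ve \to 2\norm{f}_\infty \ve$. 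With ``dominated convergence'' replaced by ``$\rho=\tau$ on $K_f$ plus tightness of $\pi$'' --- the mechanism your closing paragraph gestures at but does not actually deploy --- your barycenter argument goes through and yields a correct alternative proof.
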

\begin{proof}
 The arguments are similar to those given in \cite{Chu} and \cite{CLau} for groups, but we include the proof for completeness. Let $LUC(S)$ be equipped with the topology $\rho$ of pointwise convergence  and let the Cartesian product $LUC(S)^{LUC(S)}$ be equipped with the product topology. Define a linear map $L: LUC(S) \to LUC(S)$ by $L(f) = f* \pi \qquad (f\in LUC(S))$.

Consider $L$ as an element in $LUC(S)^{LUC(S)}$, and so are the $n$-th iterates
$L^n = \overbrace{L\circ \cdots\circ L}^{\rm n-times}$.  By Lemma \ref{in}, $L^n \in  \prod_{f \in LUC(S)} K_f \subseteq LUC(S)^{LUC(S)}$ for all $n$. It follows that
the closed convex hull $K:=\overline{ \rm co}\,\{L^n : n =1,2, \ldots \} \subseteq \prod_f K_f$ is compact in the product topology by Lemma \ref{Kf-cpct}.

 Define an affine map $T: K \rightarrow K$ by $T(\Lambda)(f)= \Lambda(f) * \pi \qquad (\Lambda \in K, f \in LUC(S))$.
Then $T$ is continuous. Indeed, given a net $(\Lambda_\alpha)_{\alpha}$ in $K$ converging to $\Lambda \in K$, then $\Lambda_\alpha(f) \longrightarrow \Lambda(f)$ pointwise on $S$
for each $f\ \in LUC(S)$. By Lemma \ref{in}, $\Lambda(f), \Lambda_\alpha(f) \in K_f$ and hence Lemma
\ref{Kf-cpct} implies that $(\Lambda_\alpha(f))_{\alpha}$ converges to $\Lambda(f)$ uniformly on compact sets in $S$. It
follows  that $(\Lambda_\alpha(f)* \pi)_{\alpha}$ converges pointwise to $\Lambda(f)*\pi$. This proves that $(T(\Lambda_\alpha))_{\alpha}$ converges to $T(\Lambda)$ in the product topology.

By the Markov-Kakutani fixed-point theorem, $T$ admits a fixed point $P \in K$ which gives
$$P(f) = P(f) * \pi \qquad (f\in LUC(S)).$$
It is easy to see that $P^2=P$ and $P(LUC(S)) = H_\pi(S)$. The last assertion follows from the fact that left translations on $S$ commute with the operator $L$.
\end{proof}

\begin{theorem}\label{1} Let $S$ be a metric semigroup with the Liouville property.  Then $S$ is amenable.
\end{theorem}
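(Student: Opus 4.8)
The plan is to read the invariant mean directly off the projection $P$ constructed in Proposition~\ref{p}, using the Liouville hypothesis to identify its range. By Proposition~\ref{p} there is a contractive projection $P\colon LUC(S)\to LUC(S)$ whose range is $H_\pi(S)$ and which commutes with left translations. Under the Liouville property for $\pi$ we have $H_\pi(S)=\mathbb{C}\mathbf{1}$, so for every $f\in LUC(S)$ the function $Pf$ is a constant multiple of $\mathbf{1}$. I would therefore define $\varphi(f)\in\mathbb{C}$ to be that constant, i.e.\ $Pf=\varphi(f)\mathbf{1}$. Since $\mathbf{1}$ is a single fixed function, $\varphi$ is well defined, and it is linear because $P$ is. The claim is that this $\varphi$ is the desired left invariant mean.

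First I would verify that $\varphi$ is a mean. Because $\pi$ is a probability measure, $\mathbf{1}*\pi=\mathbf{1}$, so $\mathbf{1}\in H_\pi(S)$ and hence $P\mathbf{1}=\mathbf{1}$, giving $\varphi(\mathbf{1})=1$. For the norm, $|\varphi(f)|=\|\varphi(f)\mathbf{1}\|=\|Pf\|\le\|P\|\,\|f\|\le\|f\|$ since $P$ is contractive and $\|\mathbf{1}\|=1$; thus $\|\varphi\|\le 1$, and combined with $\varphi(\mathbf{1})=1$ we get $\|\varphi\|=1$. Now $LUC(S)$ is a unital commutative $C^*$-algebra, and a bounded linear functional on a unital $C^*$-algebra that attains its norm $1$ at the unit $\mathbf{1}$ is automatically a state, hence positive; this is precisely where the positivity of $\varphi$ comes from.

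For left invariance I would use that $P$ commutes with left translations: $P(\delta_a*f)=\delta_a*(Pf)$ for all $a\in S$. The right-hand side equals $\delta_a*(\varphi(f)\mathbf{1})$, and the left translation of a constant function is the same constant, so it is just $\varphi(f)\mathbf{1}$. The left-hand side is $\varphi(\delta_a*f)\mathbf{1}$ by the definition of $\varphi$. Comparing the two constants yields $\varphi(\delta_a*f)=\varphi(f)$ for every $a\in S$, which is exactly the left invariance required in the definition of amenability.

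The only genuinely non-formal point is the positivity of $\varphi$, and the cleanest route is to invoke the $C^*$-algebraic characterization of states (norm one plus value one at the unit) rather than to check $\varphi(f)\ge 0$ for $f\ge 0$ by hand. Everything else is a direct unwinding of Proposition~\ref{p} together with the identification $H_\pi(S)=\mathbb{C}\mathbf{1}$; in particular, the compactness and Markov--Kakutani fixed-point arguments have already been absorbed into the construction of $P$, so no further analytic work is needed here.
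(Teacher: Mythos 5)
Your proof is correct and takes essentially the same approach as the paper: both read the mean off the projection $P$ of Proposition~\ref{p} by writing $Pf = \varphi(f)\mathbf{1}$, using the Liouville hypothesis to identify the range of $P$ with $\mathbb{C}\mathbf{1}$. The paper simply declares that $\varphi$ is ``evidently'' a left invariant mean, while you supply the routine verification (unitality from $P\mathbf{1}=\mathbf{1}$, norm one from contractivity, positivity via the state criterion on the unital $C^*$-algebra $LUC(S)$, and invariance from the commutation of $P$ with left translations), all of which is exactly what the paper's ``evidently'' is meant to cover.
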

\begin{proof} Let $\pi$ be a tight
probability measure on $S$ such that $H_\pi(S) = \mathbb{C}\mathbf 1$.  Let $P: LUC(S) \rightarrow LUC(S)$ be the contractive projection constructed in Proposition \ref{p}. Since $P(LUC(S)) = H_\pi(S)$, we have $P(f) = \vp(f)\mathbf 1 \qquad (f \in LUC(S))$ for a unique functional $\vp : LUC(S) \rightarrow \mathbb{C}$. Evidently, $\vp$ is a left invariant mean.
\end{proof}

The above approach using the contractive projection $P$ gives an alternative
 proof of the implication (i) $\Rightarrow$ (iii) in Theorem \ref{610}.

\section{Examples}
\label{nilp}

A measure on a semigroup is called {\it non-degenerate} if its support generates the semigroup.
It is known that if a locally compact abelian semigroup $S$ supports a  non-degenerate probability measure $\pi$, then $H_\pi(S)= \mathbb{C}\mathbf 1$ (cf.\,\cite{lz,rs}).
We now present a class of discrete non-abelian semigroups with the Liouville property for all non-degenerate probability measures.

\bdefin \label{nil}\rm
Let $S$ be a discrete semigroup with identity $e$. A
{\it central series} of $S$ is a finite chain
$$
  \gekl{e} = S_0 \subsetneq S_1 \subsetneq \dotso \subsetneq S_n = S,
$$
of subsemigroups in $S$ such that
\begin{enumerate}
\item[(i)] $S_m$ is {\it right reversible} for all $0 \leq m \leq n$, that is, for all $x, y$ in $S_m$, we have $S_m x \cap S_m y \neq \emptyset$;
\item[(ii)] For every $1 \leq m \leq n$, $a \in S_m$ and $s \in S$, there exist $x, y$ in $S_{m-1}$ with $xas = ysa$.
\end{enumerate}
We call $n$ the {\it length} of the above central series.
\edefin
Semigroups admitting central series are a natural generalisation of nilpotent groups. We should note, however, that the concept of nilpotent semigroups is not defined in terms of the  existence of central series (cf. \cite{Mal}). Evidently, every abelian semigroup admits a central series of length 1.

\bexample
The Heisenberg semigroup
$$
  S = \menge{
  \rukl{
  \begin{smallmatrix}
  1 & x & z \\
  0 & 1 & y \\
  0 & 0 & 1
  \end{smallmatrix}
  }
  \in GL_3(\Rz)}{x,y,z \geq 0}
$$
admits a central series of length $2$.

To see this, set
$$
  S_0 = \gekl{
  \rukl{
  \begin{smallmatrix}
  1 & 0 & 0 \\
  0 & 1 & 0 \\
  0 & 0 & 1
  \end{smallmatrix}
  }
  },
  S_1 = \menge{
  \rukl{
  \begin{smallmatrix}
  1 & 0 & z \\
  0 & 1 & 0 \\
  0 & 0 & 1
  \end{smallmatrix}
  }
  }{z \geq 0}
  \ and \
  S_2 = S.
$$
Condition (i) is obviously satisfied for $S_0$ and $S_1$ ($S_1$ is abelian), and it holds for $S_2 = S$ as well because of
$$
  \rukl{
  \begin{smallmatrix}
  1 & 0 & \zeta + x \eta \\
  0 & 1 & 0 \\
  0 & 0 & 1
  \end{smallmatrix}
  }
  \rukl{
  \begin{smallmatrix}
  1 & \xi & 0 \\
  0 & 1 & \eta \\
  0 & 0 & 1
  \end{smallmatrix}
  }
  \rukl{
  \begin{smallmatrix}
  1 & x & z \\
  0 & 1 & y \\
  0 & 0 & 1
  \end{smallmatrix}
  }
  =
  \rukl{
  \begin{smallmatrix}
  1 & 0 & z + \xi y \\
  0 & 1 & 0 \\
  0 & 0 & 1
  \end{smallmatrix}
  }
  \rukl{
  \begin{smallmatrix}
  1 & x & 0 \\
  0 & 1 & y \\
  0 & 0 & 1
  \end{smallmatrix}
  }
  \rukl{
  \begin{smallmatrix}
  1 & \xi & \zeta \\
  0 & 1 & \eta \\
  0 & 0 & 1
  \end{smallmatrix}
  }
$$
for all $x, y, z$ and $\xi, \eta, \zeta$.

Condition (ii) is true for $m=1$ since $S_1$ is abelian. For $m=2$, condition (ii) holds because we have for all $x, y, z$ and $\xi, \eta, \zeta$:
$$
  \rukl{
  \begin{smallmatrix}
  1 & 0 & x \eta \\
  0 & 1 & 0 \\
  0 & 0 & 1
  \end{smallmatrix}
  }
  \rukl{
  \begin{smallmatrix}
  1 & \xi & \zeta \\
  0 & 1 & \eta \\
  0 & 0 & 1
  \end{smallmatrix}
  }
  \rukl{
  \begin{smallmatrix}
  1 & x & z \\
  0 & 1 & y \\
  0 & 0 & 1
  \end{smallmatrix}
  }
  =
  \rukl{
  \begin{smallmatrix}
  1 & 0 & \xi y \\
  0 & 1 & 0 \\
  0 & 0 & 1
  \end{smallmatrix}
  }
  \rukl{
  \begin{smallmatrix}
  1 & x & z \\
  0 & 1 & y \\
  0 & 0 & 1
  \end{smallmatrix}
  }
  \rukl{
  \begin{smallmatrix}
  1 & \xi & \zeta \\
  0 & 1 & \eta \\
  0 & 0 & 1
  \end{smallmatrix}
  }
  .
$$
Actually, exactly the same computations show that for every subring $R$ of $\Rz$, the semigroup
$$
  S = \menge{
  \rukl{
  \begin{smallmatrix}
  1 & x & z \\
  0 & 1 & y \\
  0 & 0 & 1
  \end{smallmatrix}
  }
  \in GL_3(\Rz)}{x,y,z \in R \cap [0,\infty)}
$$
admits a central series of length $2$.
\eexample

We show that semigroups with central series have the Liouville property for all non-degenerate probability measures. This generalizes the corresponding result for
nilpotent groups \cite{DM,Mar} (see also \cite{CH}). First, we sketch a proof of the following lemma, using similar arguments in \cite{CH}.
\blemma
\label{harmonic-center}
Let $S$ be a semigroup with identity $e$ and  $\pi$ a non-degenerate probability measure on $S$. Then for each $f \in H_{\pi}(S)$, we have $f(ax) = f(x)$ for all $x \in S$ and $a$ in the center of $S$.
\elemma
\bproof
Considering the real and imaginary parts  separately,
we may assume that $f$ is real-valued.
Fix $a$ in the center of $S$. Set $g(x) = f(x) - f(ax)$, where $g \in \ell^{\infty}(S)$. Let $\alpha \defeq \sup_{x \in S} g(x)$. Then $g \in H_{\pi}(S)$. Choose a sequence $(x_n)_n$ in $S$ with $\alpha = \lim_{n \to \infty} g(x_n)$. Define $g_n \in \ell^{\infty}(S)$ by  $g_n(x) \defeq g(x_n x)$. Then $g_n \in H_{\pi}(S)$ and $\lim_{n \to \infty} g_n(e) = \alpha$, where $g_n(x) = g(x_nx) \leq \alpha$ for all $x \in S$. We also know that $\norm{g_n}_{\infty} \leq \norm{g}_{\infty}$.

Let $h$ be a w*-limit point of $g_n$ in $\ell^{\infty}(S)$. Since $g_n \in H_{\pi}(S)$ and the map $\vp\in \ell^{\infty}(S) \mapsto  \vp * \pi \in \ell^{\infty}(S)$ is w*-continuous, we have $h=h * \pi$ in $\ell^\infty(S)$ and therefore $h \in H_{\pi}(S)$. As w*-convergence in $\ell^\infty(S)$ implies pointwise convergence, $g_n(x) \leq \alpha$  implies $h(x) \leq \alpha$ for all $x \in S$. Since $\lim_{n \to \infty} g_n(e) = \alpha$, we have $h(e) = \alpha$.

We show $h(a) = \alpha$. Since $S = \bigcup_{n=1}^{\infty} ({\rm supp}\,\pi)^n = \bigcup_{n=1}^{\infty} \supp\,\pi^{n}$, one can choose $n \in \Nz$ such that $\pi^{n}(a) > 0$.  If $h(a) < \alpha$, then
\bglnoz
  && \alpha = h(e) = h*\pi^n(e)= \sum_{y \in S} h(y) \pi^{n}(y) = h(a) \pi^{n}(a) + \sum_{y \neq a} h(y) \pi^{n}(y) \\
  &<& \alpha \pi^{n}(a) + \sum_{y \neq a} \alpha \pi^{n}(y) = \alpha
\eglnoz
which is a constradiction. Hence we must have $h(a) = \alpha$. Similarly, $h(a^p) = \alpha$ for all $p \in \Nz$.

Now if $\alpha > 0$, then we can choose $m \in \Nz$ with $\norm{f}_{\infty} \leq \tfrac{1}{4} m \alpha$, and $n \in \Nz$ with $g_n(a^p) > \tfrac{1}{2} \alpha$ for all $1 \leq p \leq m$. This gives
\bglnoz
  && \tfrac{1}{2} m \alpha < \sum_{p=1}^{m} g_n(a^p) = \sum_{p=1}^{m} f(x_n a^p) - f(a x_n a^p)
  = \sum_{p=1}^{m} f(x_n a^p) - f(x_n a^{p+1}) \\
  &=& f(x_n a) - f(x_n a^{p+1}) \leq 2 \cdot \tfrac{1}{4} m \alpha = \tfrac{1}{2} m \alpha
\eglnoz
 which is impossible. Therefore $\alpha \leq 0$ and we have $g(x) \leq 0$ for all $x \in S$, that is, $f(x) \leq f(ax)$ for $x \in S$. Repeating the previous argument for the function $ f(ax) - f(x)$, we get $f(ax) \leq f(x)$   and hence    $f(x) = f(ax)$ for all $x \in S$.
\eproof

\btheo
\label{CS-->Liouville}
Let $S$ be a semigroup with identity and a central series. Then for every non-degenerate probability measure $\pi$ on $S$, we have $H_{\pi}(S) = \Cz \mathbf{1}$.
\etheo

\bproof
We proceed inductively on the length $n$ of the central series of $S$. The case $n=0$ is trivial. For the inductive step, assume the assertion is true for length $n$ and let $$\gekl{e} = S_0 \subsetneq S_1 \subsetneq \dotso \subsetneq S_n \subseteq S_{n+1} = S$$ be a central series of length $n+1$. Since $S$ has an identity in $S_1$ which is right reversible,
we can define an equivalence relation $\sim$ on $S$ by setting $x \sim y$ if there exist $a,b \in S_1$ satisfying $ax=by$.  The set $S / {}_{\sim}$ of equivalence classes  $\dot{x}$ is a semigroup structure with product $\dot{x} \cdot \dot{y} \defeq (xy)\dot{ }$, which is well-defined because $S_1$ is in the center of $S$.Observe that $$\gekl{\dot{e}} = S_1 / {}_{\sim} \subsetneq S_2 / {}_{\sim} \subsetneq \dotso \subsetneq S_n / {}_{\sim} \subseteq S_{n+1} / {}_{\sim} = S / {}_{\sim}$$ is a central series of length at most $n$ for $S / {}_{\sim}$.

Now let $\pi$ be a probability measure on $S$ such that $\supp\,\pi$ generates $S$ as a semigroup. Let $q: \ S \to S / {}_{\sim}$ be the quotient map. Define a probability measure $\dot{\pi}$ on $S / {}_{\sim}$ by setting $\dot{\pi}(\dot{x}) \defeq \sum_{x \in q^{-1}(\dot{x})} \pi(x)$. Then $\supp\,\dot{\pi}$ generates $S / {}_{\sim}$ as a semigroup. By the inductive hypothesis, we have $H_{\dot{\pi}}(S / {}_{\sim}) = \Cz \mathbf{1}$.

Let $f \in H_{\pi}(S)$. Define $\dot{f}: \ S / {}_{\sim} \to \Cz$ by  $\dot{f}(\dot{x}) \defeq f(x)$. This is well-defined. Indeed, if $x \sim y$, then there exist $a,b \in S_1$ with $ax=by$. As $S_1$ is contained in the center of $S$, Lemma~\ref{harmonic-center} yields  $f(x) = f(ax) = f(by) = f(y)$. One verifies readily that $\dot{f} * \dot{\pi}=\dot{f}$.
 Therefore $\dot{f}$, and hence  $f$, must be constant.
\eproof

\end{document}